\definecolor{darkgreen}{rgb}{0,0.6,0}
\definecolor{darkblue}{rgb}{0.0,0.0,0.8}
\definecolor{orange}{rgb}{1.0,0.45,0.0}
\definecolor{teal}{rgb}{0.12,0.73,0.85}
\newcommand{\ds}{\displaystyle}
\newcommand{\ben}{\begin{equation}}     
\newcommand{\eeqn}{\end{equation}}
\newcommand{\bey}{\begin{eqnarray}}
\newcommand{\eey}{\end{eqnarray}}
\newtheorem{thm}{Theorem}[section]
\newtheorem{lemma}[thm]{Lemma}
\begin{document}

\begin{flushleft}
{\Large
\textbf{Mandelbrot sets for fixed template iterations}
}
\\
\vspace{4mm}
Mark Comerford$^{1,}$\footnote[1]{Associate Professor, Department of Mathematics, University of Rhode Island, 5 Lippitt Road, Kingston, RI 02881; Email: mcomerford@math.uri.edu}, Anca R\v{a}dulescu$^2,$\footnote[2]{Associate Professor, Department of Mathematics, SUNY New Paltz, 1 Hawk Road, New Paltz, NY 12561; Email: radulesa@newpaltz}, Kieran Cavanagh$^3$
\\

\indent $^1$ \emph{Department of Mathematics, University of Rhode Island}

\indent $^2$ \emph{Department of Mathematics, SUNY New Paltz}

\indent $^3$ \emph{Department of Mathematics, Penn State University}

\end{flushleft}

\vspace{3mm}
\begin{abstract}
We study the dynamics of template iterations, consisting of arbitrary compositions of functions chosen from a finite set of polynomials. In particular, we focus on templates using complex unicritical maps in the family $\{ z^d + c, c \in \mathbb{C}, d \ge 2 \}$. We examine the dependence on parameters of the connectedness locus for a fixed template and show that, for most templates, the connectedness locus moves upper semicontiuously. On the other hand, one does not in general have lower semicontinuous dependence, and we show this by means of a counterexample.
\end{abstract}

\section{Introduction on template iterations} 

In previous work by the second author, we first introduced questions concerning the asymptotic dynamics of template iterations of two quadratic maps from the family $f_c : \mathbb{C} \to \mathbb{C}, f_{c}(z) = z^2+c$ where $c \in \mathbb{C}$ is a constant parameter~\cite{JNLS,radulescu2016symbolic}. This can be viewed as a particular case of the non-autonomous iteration of polynomials, which was first studied in 1991 by Fornaess and Sibony~\cite{fornaess1991random}. Additional work was done 
done by Br\"uck et al \cite{BR1, BBR,BR2}, by Sumi \cite{Sumi1, Sumi2, Sumi3, sumi2013dynamics, Sumi2Generator}, and by the first author \cite{Com3, comerford2006hyperbolic,Com5,CW1, CW2}. 

The framework of template iterated systems can be generalized to apply to any number of generating monic unicritical polynomials of the form $f_{(c,d)} = z^d + c$. More precisely, 
given any natural number $D$, any $D$-tuple ${\bf c} = (c_0,  \ldots \ldots, c_{D-1}) \in \mathbb{C}^D$, any $D$-tuple of integers ${\bf d} = (d_0,  \ldots \ldots, d_{D-1})$ with $d_i \ge 2$ for each $0 \le i \le D-1$, and any sequence ${\bf s} = \{s_m\}_{m=1}^\infty \in \{ 0,\ldots \ldots, D-1 \}^\infty$ (which we also call a \emph{template}), we consider the iterated system in which the polynomial $P^{{\bf s},{\bf c},{\bf d}}_m$ iterated at each step $m$ is $f_{(c_i, d_i)}=z^{d_i} + c_i$, if $s_m=i$. We then say a sequence $\{P_m^{{\bf s},{\bf c},{\bf d}} \}_{m=1}^\infty$ of monic centered unicritical polynomials is \emph{generated by the $D$-tuples ${\bf c} = (c_0,  \ldots \ldots, c_{D-1})$,  ${\bf d}= (d_0,  \ldots \ldots, d_{D-1})$} and the template ${\bf s}$ if it is constructed in this way. For two fixed iterative times $0 \le m < n$, we will also need to consider the polynomial composition taken from time $m$ to time $n$ given by
$$Q^{{\bf s},{\bf c},{\bf d}}_{m,n} = f_{(c_{s_n},d_{s_n})} \circ \cdots \cdots \circ f_{(c_{s_{m+1}}, d_{s_{m+1}})} = P^{{\bf s},{\bf c},{\bf d}}_n \circ \cdots \cdots \circ P^{{\bf s},{\bf c},{\bf d}}_{m+1}.$$

\noindent For convenience we will denote in particular $Q_{0,n}^{{\bf s},{\bf c},{\bf d}}$ by $Q_n^{{\bf s},{\bf c},{\bf d}}$ and we will refer to the $D$-tuples ${\bf c}$ and ${\bf d}$ as respectively the \emph{constant} and \emph{degree} vectors associated with our template iteration. If $D = 1$, then clearly we recover the classical situation of iteration with a single monic centered unicritical polynomial. Note that, in view of Theorem 2.1 in \cite{Com3}, any non-autonomous sequence of unicritical polynomials will be conjugate in the appropriate sense to a sequence of monic centered unicritical polynomials. However, in general, the members of such a monic centered sequence will not be chosen from among finitely many possibilities as we consider here (even when the members of the original sequence were chosen from among finitely many possible polynomials). Finally, we remark that template iteration using polynomials of varying degrees has already appeared in the literature. See for example Theorem 1.12 in \cite{CW2} where the authors consider all possible template sequences arising from choices among the two polynomials $z^2-1$ and $z^3$. 



Let $\sigma$ denote the shift map on the template space $\{ 0,\ldots \ldots, D-1 \}^\infty$ and for $m \ge 0$ let $\sigma^{\circ m}$ denote the $m$th iterate of $\sigma$, which has the effect of truncating the first $m$ members of any given  sequence (here, we just take $\sigma^{\circ 0}$ to be the identity).
For any template iterated system, since the leading terms of the finitely many polynomials $f_{(c_i, d_i)}(z)$, $0 \le i \le D-1$ dominate when $|z|$ is large, one can then consider, in a similar fashion as in the traditional case of iterations with a single map, the \emph{iterated basin of infinity at time $m$}, arising from the truncated sequence $\sigma^{\circ m}({\bf s})$, on which all points escape locally uniformly to infinity under the compositions $Q^{{\bf s},{\bf c},{\bf d}}_{m,n}$ as $n$ tends to infinity. We denote this basin of infinity by ${\cal A}^{\sigma^{\circ m}({\bf s}),{\bf c},{\bf d}}_\infty$ or more simply ${\cal A}^{{\bf s},{\bf c},{\bf d}}_{\infty,m}$.

The complement of ${\cal A}^{{\bf s}, {\bf c}, {\bf d}}_{\infty,m}$, for each $m \geq 0$, is then the set of points whose orbits do not escape to infinity (which is easily seen to be equivalent to the condition that the orbits be bounded) and is called the \emph{iterated filled Julia set at time $m$}. We denote this set by ${\cal K}^{\sigma^{\circ m}({\bf s}),{\bf c},{\bf d}} $ or more simply ${\cal K}^{{\bf s},{\bf c},{\bf d}}_m$:
$${\cal K}^{\sigma^{\circ m}({\bf s}),{\bf c},{\bf d}} = {\cal K}^{{\bf s},{\bf c},{\bf d}}_m = \{ z \in \mathbb{C}; \: Q^{{\bf s},{\bf c},{\bf d}}_{m,n}(z) \text{ is bounded}\}.$$ 

Finally, for each $m \ge 0$ we have the \emph{iterated Julia set at time $m$}, which is simply the common boundary of the iterated filled Julia set and basin of infinity and is denoted by ${\cal J}^{\sigma^{\circ m}({\bf s}),{\bf c},{\bf d}} $ or just ${\cal J}^{{\bf s},{\bf c},{\bf d}}_m$ i.e. 
$${\cal J}^{\sigma^{\circ m}({\bf s}),{\bf c},{\bf d}} = {\cal J}^{{\bf s},{\bf c},{\bf d}}_m = \partial {\cal K}^{{\bf s},{\bf c},{\bf d}}_m.$$

The iterated Julia sets coincide with the iterated Julia sets from standard non-autonomous polynomial iteration and are precisely those points on which the compositions $Q^{{\bf s},{\bf c},{\bf d}}_{m,n}$ fail to give a normal family on any neighbourhood (see e.g. \cite{comerford2006hyperbolic} for details). 

When $m = 0$ we will refer to the corresponding iterated basin of infinity, filled Julia set, and Julia set as simply the \emph{basin of infinity}, \emph{filled Julia set}, and \emph{Julia set} respectively. These then 
clearly extend the corresponding definitions from standard polynomial iteration and for simplicity we usually denote them by ${\cal A}^{{\bf s},{\bf c},{\bf d}}_{\infty}$, ${\cal K}^{{\bf s},{\bf c},{\bf d}}$, and ${\cal J}^{{\bf s},{\bf c},{\bf d}}$, i.e. 

$${\cal A}^{{\bf s},{\bf c},{\bf d}}_{\infty} = {\cal A}^{{\bf s},{\bf c},{\bf d}}_{\infty,0},\; {\cal K}^{{\bf s},{\bf c},{\bf d}}= {\cal K}^{{\bf s},{\bf c},{\bf d}}_0,\; {\cal J}^{{\bf s},{\bf c},{\bf d}} = {\cal J}^{{\bf s},{\bf c},{\bf d}}_0.$$

When the $D$-tuples ${\bf c}$, ${\bf d}$ are fixed and there is no danger of confusion, we will omit ${\bf c}$, ${\bf d}$ from the superscript for simplicity, and use the notation $Q^{\bf s}_{m,n}$ for the composition maps, ${\cal A}^{\bf s}_{\infty,m}$ for the basin of infinity at time $m$, and respectively ${\cal K}^{\bf s}_m$ and ${\cal J}^{\bf s}_m$ for the iterated filled Julia sets and the iterated Julia sets at time $m \geq 0$ (with similar notation for the basin of infinity, filled Julia set, and Julia set at time $0$).

The term `iterated' for the iterated basins of infinity, filled Julia sets, and Julia sets is justified in view of the following invariance result, the proof of which is a relatively easy exercise we leave to the reader (also see~\cite{comerford2006hyperbolic} for a slightly stronger invariance statement).

\begin{lemma}[Complete Invariance] 
\label{completelyinvariant}
For any ${\bf s} \in \{ 0, \ldots \ldots, D-1 \}^\infty$ and $0 \le m  < n$, we have that $Q^{\bf s}_{m,n}({\cal A}^{\bf s}_{\infty, m}) = {\cal A}^{\bf s}_{\infty, n}$,  $Q^{\bf s}_{m,n}({\cal K}^{\bf s}_m) = {\cal K}^{\bf s}_n$, and $Q^{\bf s}_{m,n}({\cal J}^{\bf s}_m) = {\cal J}^{\bf s}_n$.
\end{lemma}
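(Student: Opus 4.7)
The plan rests on the fundamental composition identity $Q^{\bf s}_{n,k} \circ Q^{\bf s}_{m,n} = Q^{\bf s}_{m,k}$ for all $k > n > m \ge 0$, together with the fact that each $Q^{\bf s}_{m,n}$ is a polynomial, hence a continuous, surjective, open map of $\mathbb{C}$ onto itself. I would handle the three equalities in the order filled Julia set, basin of infinity, Julia set, since the last is naturally built from the preceding two.

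For ${\cal K}^{\bf s}_m$, the forward inclusion is immediate: if $z \in {\cal K}^{\bf s}_m$, then $Q^{\bf s}_{n,k}(Q^{\bf s}_{m,n}(z)) = Q^{\bf s}_{m,k}(z)$ remains bounded as $k \to \infty$, hence $Q^{\bf s}_{m,n}(z) \in {\cal K}^{\bf s}_n$. For the reverse inclusion, given $w \in {\cal K}^{\bf s}_n$, surjectivity of the polynomial $Q^{\bf s}_{m,n}$ furnishes a preimage $z$; for $k > n$ the iterate $Q^{\bf s}_{m,k}(z) = Q^{\bf s}_{n,k}(w)$ is bounded, and the intermediate iterates $Q^{\bf s}_{m,j}(z)$ for $m < j \le n$ are finite in absolute value, so the full orbit is bounded and $z \in {\cal K}^{\bf s}_m$. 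The basin statement follows from the same composition identity: the orbit of $z$ under the maps $Q^{\bf s}_{m,k}$ is unbounded (equivalently escapes to infinity, by the uniform escape radius coming from the finiteness of the generating family) if and only if the tail orbit of $Q^{\bf s}_{m,n}(z)$ under the $Q^{\bf s}_{n,k}$ is unbounded, and surjectivity again provides preimages for the reverse inclusion.

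For the Julia set $\partial {\cal K}^{\bf s}_m$, the forward inclusion is a quick continuity argument: a boundary point $z$ lies in ${\cal K}^{\bf s}_m$ (which is closed) and is the limit of a sequence $z_k \in {\cal A}^{\bf s}_{\infty,m}$, so by continuity and the invariance already established, $Q^{\bf s}_{m,n}(z) \in {\cal K}^{\bf s}_n$ is the limit of $Q^{\bf s}_{m,n}(z_k) \in {\cal A}^{\bf s}_{\infty,n}$ and hence lies in $\partial {\cal K}^{\bf s}_n = {\cal J}^{\bf s}_n$. The reverse inclusion is the main obstacle: given $w \in {\cal J}^{\bf s}_n$, I must produce a preimage $z$ of $w$ that actually lies on $\partial {\cal K}^{\bf s}_m$, not merely in ${\cal K}^{\bf s}_m$. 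The idea is to fix any preimage $z$ of $w$ (which is automatically in ${\cal K}^{\bf s}_m$ by the filled Julia set part) and approximate it by preimages of a sequence $w_k \to w$ chosen inside ${\cal A}^{\bf s}_{\infty,n}$; those preimages will lie in ${\cal A}^{\bf s}_{\infty,m}$ by the basin invariance, and hence witness $z \in \overline{{\cal A}^{\bf s}_{\infty,m}} \cap {\cal K}^{\bf s}_m = \partial {\cal K}^{\bf s}_m$.

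The technical ingredient required is the continuous dependence of roots of a polynomial on its coefficients, applied to the family $Q^{\bf s}_{m,n}(\zeta) - w_k$: since $z$ is a root of the limiting polynomial $Q^{\bf s}_{m,n}(\zeta) - w$, for all sufficiently large $k$ that polynomial has a root $z_k$ with $z_k \to z$. Equivalently, one can argue that the non-constant polynomial map $Q^{\bf s}_{m,n}$ is open and proper on $\mathbb{C}$, so the preimages of a convergent sequence accumulate at the preimages of the limit. This lifting step is the only place where the argument uses more than the defining identity, and once it is in place the lemma follows.
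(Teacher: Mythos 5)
Your proposal is correct. The paper does not actually supply a proof of this lemma (it is declared ``a relatively easy exercise we leave to the reader''), but your argument --- forward and backward inclusion for ${\cal K}^{\bf s}_m$ and ${\cal A}^{\bf s}_{\infty,m}$ via the composition identity $Q^{\bf s}_{n,k}\circ Q^{\bf s}_{m,n}=Q^{\bf s}_{m,k}$ and surjectivity of polynomials, followed by the boundary case handled through openness of the polynomial map (equivalently, continuous dependence of roots) to lift boundary points backward --- is exactly the standard argument the authors have in mind and is complete.
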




 For a fixed template ${\bf s} \in \{ 0, \ldots \ldots, D-1 \}^\infty$ and an upper bound $d \ge \max_{0 \le i \le D-1} d_i$ on degrees (which we introduce as we require compactness), we define the \emph{fixed-template Mandelbrot set} ${\cal M}^{\bf s}$ as a subset of $\mathbb{C}^D \times \{2, \ldots \ldots, d\}^D$ defined by:
$${\cal M}^{\bf s} = \{ ({\bf c}, {\bf d}) \in \mathbb{C}^D\times \{2, \ldots \ldots, d\}^D: Q^{{\bf s},{\bf c}, {\bf d}}_{m,n}(0) \text{ is bounded for all} \: 0 \le m \le n \}.$$



\noindent{\bf Remarks.} In the case of template iteration where the polynomials have constant degree, the fixed-template Mandelbrot set ${\cal M}^{\bf s}$ can obviously be identified with a subset of $\mathbb{C}^D$ as is more common in classical polynomial iteration (and as we will do ourselves in Section 3). Since having an unbounded orbit is clearly an open condition, it is easy to see that ${\cal M}^{\bf s}$ is automatically a closed subset of $\mathbb{C}^D\times \{2, \ldots \ldots, d\}^D$ (where we endow $\{2, \ldots \ldots, d\}^D$ with the discrete topology).\\

With these extensions, some of the results established in the traditional context of single map iterations are easily generalizable for template iterations. Others are nontrivial to extend, or do not hold true for templates. We already remarked earlier that template iterations is a special case of the non-autonomous iteration of polynomials. We  also note that there is a strong connection between template iteration and polynomial semigroups as studied by Sumi and  Stankewitz ~\cite{stankewitz2011dynamical, Sumi1, Sumi2,  Sumi2006, Sumi3, sumi2013dynamics}, especially for finitely generated polynomial semigroups as considered in \cite{ Sumi2Generator}.

Some results for templates follow trivially from the general theory of non-autonomous iteration, but we also have have objects and properties which apply only to the context of templates. For example, we showed in prior work that template iterated systems have an escape radius, and that, when $D = 2$ and $d_0 = d_1 = 2$, $M = \max\{2,|c_0|,|c_1|\}$ acts as such an escape radius for any template iteration of the two maps arising from the pair $(c_0,c_1)$~\cite{JNLS}. In this paper we want to refine this result and we show that the fixed template Mandelbrot set ${\cal M}^{\bf s}$ is a subset of the set $\overline{\mathrm{D}}(0,2)^D \times \{2, \ldots \ldots, d\}^D \subset \mathbb{C}^d \times \{2, \ldots \ldots, d\}^D$ for most templates ${\bf s}$.

For any ${\bf s} \in \{ 0, \ldots \ldots, D-1 \}^\infty$, it remains true that the fixed template Mandelbrot set coincides with the connectedness locus for the Julia set corresponding to the same template:

\vspace{.2cm}
\begin{lemma}
\label{CharacterizingM}
The fixed template Mandelbrot set ${\cal M}^{\bf s}$ is equal to each of the following sets: 
\begin{eqnarray*}
&&\{ ({\bf c}, {\bf d}) \in \mathbb{C}^D\times \{2, \ldots \ldots, d\}^D: {\cal K}^{{\bf s}, {\bf c}, {\bf d}} \text{ is connected } \},\\
&&\{ ({\bf c}, {\bf d}) \in \mathbb{C}^D\times \{2, \ldots \ldots, d\}^D: {\cal J}^{{\bf s}, {\bf c}, {\bf d}} \text{ is connected } \},\\
&& \{ ({\bf c}, {\bf d}) \in \mathbb{C}^D\times \{2, \ldots \ldots, d\}^D : {\cal K}^{{\bf s}, {\bf c}, {\bf d} }_m \text{ is connected, for all } m \geq 0 \},\\
&& \{ ({\bf c}, {\bf d}) \in \mathbb{C}^D\times \{2, \ldots \ldots, d\}^D : {\cal J}^{{\bf s}, {\bf c}, {\bf d} }_m \text{ is connected, for all } m \geq 0 \},\\
&& \{ ({\bf c}, {\bf d}) \in \mathbb{C}^D\times \{2, \ldots \ldots, d\}^D :  0 \in {\cal K}^{{\bf s}, {\bf c}, {\bf d} }_m \text{ for all } m \geq 0 \},\\
&& \{ ({\bf c}, {\bf d}) \in \mathbb{C}^D\times \{2, \ldots \ldots, d\}^D : c_{s_m} \in {\cal K}^{{\bf s}, {\bf c}, {\bf d} }_m \text{ for all } m \geq 1 \}.\\
\end{eqnarray*}

\end{lemma}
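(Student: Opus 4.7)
My plan is to organize the equivalences in three stages: first the two orbit characterizations (5) and (6) by unwinding definitions, then the reduction of the connectedness statements from ``all times $m$'' to the case $m = 0$, and finally the central step equating connectedness of ${\cal K}^{\bf s}_m$ with boundedness of the critical orbit. The defining condition of ${\cal M}^{\bf s}$ is that $Q^{\bf s}_{m,n}(0)$ is bounded for all $0 \le m \le n$, which is literally $0 \in {\cal K}^{\bf s}_m$ for every $m \ge 0$; this yields the fifth characterization. For the sixth, since $c_{s_m} = P^{\bf s}_m(0)$, one has $Q^{\bf s}_{m,n}(c_{s_m}) = Q^{\bf s}_{m-1,n}(0)$ for $n \ge m$, so (6) is a re-indexing of (5).

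\textbf{Reduction to $m = 0$.} The implications (3) $\Rightarrow$ (1) and (4) $\Rightarrow$ (2) are trivial. Conversely, Lemma~\ref{completelyinvariant} gives $Q^{\bf s}_{0,m}({\cal K}^{\bf s}) = {\cal K}^{\bf s}_m$ and $Q^{\bf s}_{0,m}({\cal J}^{\bf s}) = {\cal J}^{\bf s}_m$, and continuous images of connected sets are connected, yielding (1) $\Rightarrow$ (3) and (2) $\Rightarrow$ (4).

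\textbf{Main equivalence (3) $\Leftrightarrow$ (5), and filled Julia set vs.\ Julia set.} For the forward direction, apply Riemann--Hurwitz to the proper degree-$d_{s_{m+1}}$ map $P^{\bf s}_{m+1}: {\cal A}^{\bf s}_{\infty, m} \to {\cal A}^{\bf s}_{\infty, m+1}$ of basins of infinity in $\hat{\mathbb{C}}$. Its only candidate critical points are $0$ (unicriticality) and $\infty$, each of full ramification $d_{s_{m+1}}$, and $\infty$ always lies in the domain basin. If ${\cal K}^{\bf s}_m$ and ${\cal K}^{\bf s}_{m+1}$ are both connected, both basins have Euler characteristic $1$, and Riemann--Hurwitz yields $\chi({\cal A}^{\bf s}_{\infty, m}) = d_{s_{m+1}} - (d_{s_{m+1}} - 1) = 1$ when $0 \in {\cal K}^{\bf s}_m$, while the alternative $0 \in {\cal A}^{\bf s}_{\infty, m}$ forces $\chi({\cal A}^{\bf s}_{\infty, m}) = 2 - d_{s_{m+1}}$, contradicting $d_{s_{m+1}} \ge 2$. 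For the converse, fix $m$ and pick a radius $R$ large enough that $|Q^{\bf s}_{k,n}(0)| \le R$ for all $k \ge m$ and $n \ge k$, using (5) together with the template escape-radius result from~\cite{JNLS}. The critical values of $Q^{\bf s}_{m,n}$ in $\mathbb{C}$ are exactly the iterates $Q^{\bf s}_{k,n}(0)$ for $m \le k \le n-1$, all lying in $\overline{\mathrm{D}}(0, R)$; Riemann--Hurwitz then shows each preimage $(Q^{\bf s}_{m,n})^{-1}(\overline{\mathrm{D}}(0, R'))$ (for any $R' > R$) is a topological closed disk, so ${\cal K}^{\bf s}_m$ is the nested intersection of these disks and hence a continuum. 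The equivalences (1) $\Leftrightarrow$ (2) and (3) $\Leftrightarrow$ (4) follow because ${\cal J}^{\bf s}_m = \partial {\cal K}^{\bf s}_m$ is then the boundary of the simply connected domain ${\cal A}^{\bf s}_{\infty, m}$ in $\hat{\mathbb{C}}$, which is connected.

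\textbf{Main obstacle.} The principal technical point is the uniform control of the critical orbit needed for (5) $\Rightarrow$ (3): the escape radius must bound not only the orbit of $0$ from time $0$ but every tail $\{Q^{\bf s}_{k,n}(0) : n \ge k\}$ for $k \ge m$, and this relies on the template escape-radius estimates. A smaller subtlety is confirming that ${\cal A}^{\bf s}_{\infty, m}$ is connected to begin with, which follows from writing it as the nested increasing union of connected neighborhoods of $\infty$ under inverse iteration.
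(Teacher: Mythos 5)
Your argument is correct in substance, but it takes a genuinely different route from the one the paper gestures at. The paper's proof is essentially a citation: it appeals to the classical Green's function argument (if a critical point lies in the basin of infinity, the level curve of the Green's function through it is a figure-eight that separates the Julia set, as in~\cite{BR2}). You instead give a Riemann--Hurwitz argument in both directions: the Euler characteristic computation for $P^{\bf s}_{m+1}\colon {\cal A}^{\bf s}_{\infty,m} \to {\cal A}^{\bf s}_{\infty,m+1}$ forces $0 \in {\cal K}^{\bf s}_m$ when both basins are simply connected, and for the converse you realize ${\cal K}^{\bf s}_m$ as a nested intersection of closed topological disks $(Q^{\bf s}_{m,n})^{-1}(\overline{\mathrm{D}}(0,R'))$, which is a continuum. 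Both are standard, and both are legitimate; your version is self-contained and more explicitly topological, whereas the Green's function approach carries additional potential-theoretic information (and is the one invoked implicitly by the reference to~\cite{BR2}). Your identification of the critical values of $Q^{\bf s}_{m,n}$ as $\{Q^{\bf s}_{k,n}(0):\, m \le k \le n-1\}$, your use of the escape radius to get a uniform bound on the critical orbit, and the reductions via Lemma~\ref{completelyinvariant} and re-indexing $c_{s_m} = P^{\bf s}_m(0)$ are all correct.

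There is, however, one gap in how you close the cycle of equivalences. Your chain gives ${\cal M}^{\bf s} = (5) = (6) = (3) = (1)$ and then $(3) \Rightarrow (4)$, $(1) \Rightarrow (2)$ from the observation that the boundary of the simply connected domain ${\cal A}^{\bf s}_{\infty,m}$ is connected; together with your reduction $(2) \Leftrightarrow (4)$ via complete invariance, this only yields ${\cal M}^{\bf s} \subset (2) = (4)$. To finish you must also show $(4) \Rightarrow (3)$ (equivalently, that if ${\cal J}^{\bf s}_m$ is connected then so is ${\cal K}^{\bf s}_m$), and the sentence ``the equivalences $(1)\Leftrightarrow(2)$ and $(3)\Leftrightarrow(4)$ follow because...'' only argues one implication. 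The missing converse is the standard topological fact that a disconnected compact plane set has a disconnected boundary: if ${\cal K}^{\bf s}_m = K_1 \sqcup K_2$ with $K_1,K_2$ nonempty, disjoint, and compact, then $\partial {\cal K}^{\bf s}_m = \partial K_1 \sqcup \partial K_2$ with both pieces nonempty, closed, and disjoint, hence ${\cal J}^{\bf s}_m$ is disconnected. Adding this one observation makes the proof complete.
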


\vspace{-.4cm}
\noindent The proof carries through for non-autonomous (and in particular template) iterations in a similar way to that for single polynomial maps (using Green's functions where a critical point in the basin of infinity gives rise to a level curve which separates the Julia set \cite{BR2}). We will make use of this fact in the following sections.


\section{Boundedness of the template Mandelbrot set}

It is easy to see from the standard theory of iterations with a single quadratic polynomial that, for some templates ${\bf s} = \{s_m\}_{m=1}^\infty \in \{ 0,\ldots \ldots, D-1 \}^\infty$ and degree bounds $d$, the fixed template Mandelbrot set ${\cal M}^{\bf s}$ is unbounded in $\mathbb{C}^D\times \{2, \ldots \ldots, d\}^D$. More precisely, if there exists $i$, $0 \le i \le D-1$ for which $s_m \ne i$ for all $m \ge 1$, so that $P^{{\bf s},{\bf c},{\bf d}}_m \ne z^{d_i} + c_i$ for all $m \ge 1$, then ${\cal M}^{\bf s}$ will be unbounded in the $i$-th coordinate as there is no constraint on the corresponding critical value $c_i$ in parameter space. With this in mind, we say that the template ${\bf s} = \{s_m\}_{m=1}^\infty$ is \emph{full} if this does not occur and that, for each $0 \le i \le d-1$, $s_m = i$ and thus $P^{{\bf s},{\bf c},{\bf d}}_m = z^{d_i} + c_i$ for at least one value of $m$. We observe that this condition is clearly generic. We will show that, in the case of a full template, ${\cal M}^{\bf s}$ is bounded as one might expect. 

We first start with a technical lemma which makes use of a well-known fact from potential theory. In this paper, for $z\in \mathbb C$ and $r > 0$, we use the common notation ${\mathrm D}(z,r)$, $\overline {\mathrm D}(z,r)$ to denote the open and closed discs respectively with centre $z$ and radius $r$.


\begin{lemma}
For any ${\bf s} \in \{ 0,\ldots \ldots D-1 \}^\infty$ and any pair of $d$-tuples $({\bf c}, {\bf d})  \in {\cal M}^{\bf s}$, the iterated filled Julia sets ${\cal K}^{{\bf s},{\bf c},{\bf d}}_m$ satisfy
$${\cal K}^{{\bf s},{\bf c},{\bf d}}_m \subset \overline{\mathrm{D}}(0,2), \quad \mbox{for all}\:\; m \geq 0.$$
\label{boundedness1}
\end{lemma}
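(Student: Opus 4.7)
The plan is to show that the outer radius $R_m := \sup_{z \in {\cal K}^{{\bf s},{\bf c},{\bf d}}_m} |z|$ (which is finite since each iterated filled Julia set is compact) satisfies $R_m \le 2$ for every $m \ge 0$.

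My first step would be to record uniform boundedness of the sequence $(R_m)_{m \ge 0}$. Setting $M := \max\{2, |c_0|, \ldots, |c_{D-1}|\}$, the standard estimate $|f_{(c_i,d_i)}(z)| \ge |z|^{d_i} - |c_i| \ge |z|(|z|-1) > |z|$, valid for $|z| > M$ and $d_i \ge 2$, forces any orbit starting outside $\overline{\mathrm{D}}(0,M)$ to escape under every composition $Q^{{\bf s},{\bf c},{\bf d}}_{m,n}$. Hence ${\cal K}^{{\bf s},{\bf c},{\bf d}}_m \subset \overline{\mathrm{D}}(0,M)$ and $R_m \le M$ for all $m$. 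This is the potential-theoretic ingredient alluded to before the statement, and is equivalent to saying that the Green's function at infinity for the sequence $(P^{{\bf s},{\bf c},{\bf d}}_m)$ dominates $\log|z| - O(1)$ outside a large disc.

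Next, I would derive a recursion linking $R_m$ to $R_{m+1}$. Complete invariance (Lemma~\ref{completelyinvariant}) gives $P^{{\bf s},{\bf c},{\bf d}}_{m+1}({\cal K}^{{\bf s},{\bf c},{\bf d}}_m) = {\cal K}^{{\bf s},{\bf c},{\bf d}}_{m+1}$, so for any $z \in {\cal K}^{{\bf s},{\bf c},{\bf d}}_m$,
$$|z|^{d_{s_{m+1}}} \le |P^{{\bf s},{\bf c},{\bf d}}_{m+1}(z)| + |c_{s_{m+1}}| \le R_{m+1} + |c_{s_{m+1}}|.$$
The crucial input is Lemma~\ref{CharacterizingM}, which -- via $({\bf c},{\bf d}) \in {\cal M}^{\bf s}$ -- delivers $c_{s_{m+1}} \in {\cal K}^{{\bf s},{\bf c},{\bf d}}_{m+1}$ and thus $|c_{s_{m+1}}| \le R_{m+1}$. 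Taking the supremum over $z$ and using $d_{s_{m+1}} \ge 2$ yields
$$R_m^2 \le R_m^{d_{s_{m+1}}} \le 2 R_{m+1} \quad \text{whenever } R_m \ge 1.$$

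Finally, I would argue by contradiction: if $R_m > 2$ for some $m$, set $\varepsilon := R_m - 2 > 0$; the recursion gives $R_{m+1} \ge R_m^2/2 \ge 2 + 2\varepsilon$, so inductively $R_{m+k} - 2 \ge 2^k \varepsilon \to \infty$, contradicting the uniform bound $R_{m+k} \le M$ obtained in the first step. Hence $R_m \le 2$ for every $m \ge 0$, which is the claim. The main obstacle is producing the factor of $2$ (rather than $M$) on the right-hand side of the recursion: the replacement $|c_{s_{m+1}}| \le R_{m+1}$ via Lemma~\ref{CharacterizingM} is precisely what makes the fixed-point analysis converge to the sharp value $2$, rather than to some larger constant depending on the initial parameter vector ${\bf c}$.
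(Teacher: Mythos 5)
Your proof is correct, and it is a genuinely different argument from the one in the paper. The paper's proof is potential-theoretic: it invokes the inequality $\mathrm{diam}(K) \le 4\,\mathrm{cap}(K)$ for connected compacta (Ransford, Theorem 5.3.2(a)) together with the fact that the iterated filled Julia sets of a monic sequence have capacity $1$, then upgrades the resulting diameter bound $\le 4$ to a radius bound $\le 2$ using the $d$-fold rotational symmetry of each ${\cal K}_m$ (straightforward for even degrees, with an extra bootstrapping step for odd degrees). Your argument instead extracts a purely elementary recursion $R_m^{d_{s_{m+1}}} \le 2R_{m+1}$ from complete invariance, the triangle inequality, and the single characterization $c_{s_{m+1}} \in {\cal K}_{m+1}$ from Lemma~\ref{CharacterizingM}, and then shows that any excess over $2$ would grow without bound, contradicting the crude a priori bound $R_m \le M$. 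This bypasses all of the capacity machinery and the symmetry case-split, and the sharp constant $2$ falls out cleanly as the fixed point of $x \mapsto x^2/2$. One minor misattribution: the ``potential-theoretic ingredient'' the paper alludes to is the diameter--capacity inequality, not the escape-radius bound by $M$ (which is just the elementary leading-term estimate); your proof in fact uses no potential theory at all, which is a feature, not a bug.
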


\begin{proof} 
By Lemma \ref{CharacterizingM} above, since $({\bf c}, {\bf d})  \in {\cal M}^{\bf s}$, for each $m\geq 0$ the iterated filled Julia set ${\cal K}_m^{{\bf s},{\bf c},{\bf d}}$ is connected. By Theorem 5.3.2 (a) in~\cite{ransford1995potential}, we have that $\text{diam}({\cal K}^{{\bf s},{\bf c},{\bf d}}_m) \le 4\,\text{cap}({\cal K}^{{\bf s},{\bf c},{\bf d}}_m)$, where $\text{diam}({\cal K}^{{\bf s},{\bf c},{\bf d}}_m)$ and $\text{cap}({\cal K}^{{\bf s},{\bf c},{\bf d}}_m)$ are respectively the diameter and the logarithmic capacity of the set ${\cal K}^{{\bf s},{\bf c},{\bf d}}_m$. Furthermore, since the polynomials $f_{(c_{s_m},d_{s_m})}$ are monic, $\text{cap}({\cal K}^{{\bf s},{\bf c},{\bf d}}_m)=1$ (see Theorem 1.4(5) in~\cite{comerford2006hyperbolic} or Theorem 2.1 in~\cite{fornaess1991random}), whence $\text{diam}({\cal K}^{{\bf s},{\bf c},{\bf d}}_m) \leq 4$. 

Recall that the diameter of a regular $n$-gon with vertices on the unit circle is $2$ for $n$ even and $\sqrt{2 - 2\cos \left ( \tfrac{2\pi k}{2k+1} \right )} < 2$ for $n = 2k + 1$ odd. Since $P^{{\bf s},{\bf c},{\bf d}}_{m+1}(z) = f_{(c_{s_{m+1}}, d_{s_{m+1}})}(z)  = z^{d_{s_{m+1}}} + c_{s_{m+1}}$, if $d_{m+1}$ is even, then ${\cal K}^{{\bf s},{\bf c},{\bf d}}_m$ is symmetric about $0$ under $z \mapsto -z$ and it follows immediately that ${\cal K}^{{\bf s},{\bf c},{\bf d}}_m \subset \overline{\mathrm{D}}(0,2)$. On the other hand, when $d_{m+1}$ is odd, we have the slightly weaker condition that ${\cal K}^{{\bf s},{\bf c},{\bf d}}_m \subset \overline{\mathrm{D}}(0,r)$ where $r = 4/\sqrt{2 - 2\cos(2\pi k/(2k+1 ))} > 2$, the worst case being when $d_{m+1} = 3$ which gives $r = 4/\sqrt{3}$. Thus in any case we can say that for each $m \ge 0$ we must have ${\cal K}^{{\bf s},{\bf c},{\bf d}}_m \subset \overline{\mathrm{D}}(0,4/\sqrt{3})$ (regardless of whether $d_{m+1}$ is even or odd) and so, by Lemma \ref{CharacterizingM}, since $({\bf c}, {\bf d})  \in {\cal M}^{\bf s}$, we have that 
\begin{equation}
\label{cmbound}
|c_m| \le 4/\sqrt{3}, \quad m \ge 0.
\end{equation}

\noindent However, if $m \ge 0$, $|z| > 2$, $d_{m+1} \ge 3$, and $|c_{m+1}| \le 4/\sqrt{3}$, we have $|P^{{\bf s},{\bf c},{\bf d}}_{m+1}(z)|= |f_{(c_{s_{m+1}}, d_{s_{m+1}})}(z)| > 8 - 4/\sqrt{3} > 4$ from which it follows using \eqref{cmbound} above that $z$ escapes to infinity under iteration using the sequence $\{Q^{{\bf s},{\bf c},{\bf d}}_{m,n}\}_{n=m+1}^\infty$.  Using this, it follows that ${\cal K}^{{\bf s},{\bf c},{\bf d}}_m \subset \overline{\mathrm{D}}(0,2)$, when $d_{m+1}$ is odd also, which completes the proof.
\end{proof}

\begin{lemma} 
If $\mathbf{s} \in \{ 0,\ldots \ldots, D-1 \}^\infty$ is a full template, then $\mathcal{M}^{\mathbf{s}}$ is a compact subset of $\overline{\mathrm{D}}(0,2)^D \times \{2, \ldots \ldots, d\}^D$.
\label{boundedness2}
\end{lemma}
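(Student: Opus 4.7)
The proof should be a short consequence of the preceding Lemma \ref{boundedness1} together with one of the characterizations in Lemma \ref{CharacterizingM}. My plan is as follows.

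First I would fix $(\mathbf{c},\mathbf{d}) \in \mathcal{M}^{\mathbf{s}}$ and invoke Lemma \ref{boundedness1} to conclude that $\mathcal{K}^{\mathbf{s},\mathbf{c},\mathbf{d}}_m \subset \overline{\mathrm{D}}(0,2)$ for every $m \geq 0$. Then I would apply the last characterization in Lemma \ref{CharacterizingM}, namely that $(\mathbf{c},\mathbf{d}) \in \mathcal{M}^{\mathbf{s}}$ forces $c_{s_m} \in \mathcal{K}^{\mathbf{s},\mathbf{c},\mathbf{d}}_m$ for every $m \geq 1$. Combining these two facts gives $|c_{s_m}| \leq 2$ for every $m \geq 1$.

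Next I would use the full-template hypothesis: by definition, for each index $0 \leq i \leq D-1$ there exists some $m \geq 1$ with $s_m = i$, and for this choice of $m$ the previous step yields $|c_i| = |c_{s_m}| \leq 2$. This shows $\mathbf{c} \in \overline{\mathrm{D}}(0,2)^D$, hence
$$\mathcal{M}^{\mathbf{s}} \subset \overline{\mathrm{D}}(0,2)^D \times \{2,\ldots,d\}^D,$$
which gives the claimed inclusion. For compactness, the target set $\overline{\mathrm{D}}(0,2)^D \times \{2,\ldots,d\}^D$ is a product of a compact subset of $\mathbb{C}^D$ and a finite (hence compact) discrete space, and $\mathcal{M}^{\mathbf{s}}$ was already observed (in the Remarks following its definition) to be closed; a closed subset of a compact space is compact.

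There is no real obstacle: the only place the fullness assumption is actually used is in the step that passes from ``$|c_{s_m}| \leq 2$ for every index $m \geq 1$ occurring in the template'' to ``$|c_i| \leq 2$ for every coordinate $i$.'' Without fullness, at least one coordinate $c_i$ is entirely unconstrained by the template and the argument (correctly) fails, which matches the discussion at the start of the section explaining why fullness is necessary.
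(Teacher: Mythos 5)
Your proof is correct and follows essentially the same route as the paper's: apply the last characterization in Lemma \ref{CharacterizingM} to place each critical value $c_{s_m}$ inside $\mathcal{K}^{\mathbf{s},\mathbf{c},\mathbf{d}}_m$, use Lemma \ref{boundedness1} to bound these filled Julia sets in $\overline{\mathrm{D}}(0,2)$, invoke fullness to cover every coordinate index $i$, and finish by noting that $\mathcal{M}^{\mathbf{s}}$ is a closed subset of the compact set $\overline{\mathrm{D}}(0,2)^D \times \{2,\ldots,d\}^D$. If anything, your version is a touch cleaner: the paper writes $\mathcal{K}^{\mathbf{s},\mathbf{c},\mathbf{d}}_{m_i+1} \subset \overline{\mathrm{D}}(0,2)$ where $\mathcal{K}^{\mathbf{s},\mathbf{c},\mathbf{d}}_{m_i}$ is the set actually needed (though since Lemma \ref{boundedness1} holds for all $m$, this is a harmless slip), whereas you use the matching index directly.
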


\begin{proof}
Suppose $({\bf c}, {\bf d}) = ((c_0,  \ldots \ldots, c_{D-1}), (d_0,  \ldots \ldots, d_{D-1})) \in \mathcal{M}^{\mathbf{s}}$. Then, by Lemma \ref{CharacterizingM} it follows that for each $m \ge 1$ the critical value $c_{s_m}$ of $P^{{\bf s},{\bf c},{\bf d}}_m= f_{(c_{s_m}, d_{s_m})}$ belongs to $\mathcal{K}^{{\bf s}, {\bf c}, {\bf d}}_m$. Since ${\bf s}$ is full, it follows that, for each $0 \le i \le D-1$, there exists $m_i \ge 1$ such that $s_{m_i} = i$ and $P^{{\bf s}, {\bf c}, {\bf d}}_{m_i} = z^{d_i} + c_i$, (for some $2 \le d_i \le d$) so that $c_i \in \mathcal{K}^{{\bf s},{\bf c},{\bf d}}_{m_i}$. On the other hand, from  Lemma~\ref{boundedness1}, it follows that $\mathcal{K}^{{\bf s},{\bf c}, {\bf d}}_{m_i+1} \subset \overline{\mathrm{D}}(0,2)$. Thus, $({\bf c}, {\bf d}) \in  \overline{\mathrm{D}}(0,2)^D \times \{2, \ldots \ldots, d\}^D$, and, since $\mathcal{M}^{\mathbf{s}}$ is closed (as remarked earlier),  while the set $\{2, \ldots \ldots, d\}^D$ is finite, $\mathcal{M}^{\mathbf{s}}$ is also compact, which concludes our proof.
\end{proof}

\section{Limit behavior for arbitrary templates}

On the space of templates $\{ 0,\ldots \ldots, D-1\}^\infty$ we consider the ultrametric defined for any two templates ${\bf s} = \{s_m\}_{m=1}^\infty$, $\tilde{\bf s} = \{\tilde{s}_m\}_{m=1}^\infty$ by
$$\| {\bf s} - \tilde{\bf s} \| = \sum_{k=1}^\infty \frac{|s_k-\tilde{s}_k|}{D^k}.$$

\noindent We also make use of the product metric on $\mathbb{C}^D\times \{2, \ldots \ldots, d\}^D$. To be specific, for two points $({\bf c}, {\bf d}) = ((c_0,  \ldots , c_{D-1}), (d_0,  \ldots , d_{D-1})), ({\bf \tilde{c}}, {\bf \tilde{d}}) = ((\tilde{c}_0,\hdots,\tilde{c}_{D-1}), (\tilde{d}_0,\hdots,\tilde{d}_{D-1})) \in \mathbb{C}^D \times \{2, \ldots, d\}^D$, we have
$$d(({\bf c}, {\bf d}),({\bf \tilde{c}}, {\bf \tilde{d}})) = \max \{ \max\{|\tilde{c}_i -c_i|, |\tilde{d}_i -d_i|\}, \: 0 \le i \le D-1 \}.$$

\noindent We then use the notation 
\begin{eqnarray*}
 {\mathrm {B}}(({\bf c}, {\bf d}), \delta) &=& {\mathrm {B}}(((c_0,  \ldots \ldots, c_{D-1}), (d_0,  \ldots \ldots, d_{D-1})), \delta)\\  &=& {\mathrm{D}}(c_0, \delta) \times \cdots \cdots  \times  {\mathrm{D}}(c_{D-1}, \delta) \times (d_0 - \delta, d_0 + \delta) \times \cdots \cdots  \times   (d_{D-1} - \delta, d_{D-1} + \delta)
 \end{eqnarray*}
to denote the open balls with respect to this metric.

\medskip
\noindent The distance between a point $({\bf c}, {\bf d}) \in \mathbb{C}^D\times \{2, \ldots \ldots, d\}^D$ and a set $B \subset  \mathbb{C}^D\times \{2, \ldots \ldots, d\}^D$ is defined as: 
$$d(({\bf c}, {\bf d}),B) = \inf \{d(({\bf c}, {\bf d}),({\bf \tilde{c}},{\bf \tilde{d}})), \: ({\bf \tilde{c}},{\bf \tilde{d}}) \in B \}.$$

\noindent For two sets $A,B \in \mathbb{C}^D\times \{2, \ldots \ldots, d\}^D$, the distance from $A$ to $B$ is defined as: 
$$d(A,B) = \sup \{ d({\bf c},B), \: {\bf c} \in A \}.$$

\noindent Finally, the Hausdorff distance between two sets $A,B \in \mathbb{C}^D\times \{2, \ldots \ldots, d\}^D$ is then given by:
$$d_H(A,B) = \max \{ d(A,B), d(B,A) \}.$$


\begin{thm} Let ${\bf s} \in \{ 0,\ldots,D-1 \}^\infty$ be a full template and consider a sequence of templates $\left\{{\bf s}^N\right\}^\infty_{N=1}$ that converges to ${\bf s}$ as $N \to \infty$. Then 
$$ \lim_{N \to \infty} d \left( {\cal M}^{{\bf s}^N}, {\cal M}^{\bf s} \right) = 0.$$
\label{usc}
\end{thm}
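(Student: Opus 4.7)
The plan is to proceed by contradiction, using the uniform containment provided by Lemma \ref{boundedness2} together with continuity of polynomial compositions in the parameter data. Suppose that the conclusion fails: then there exist $\epsilon > 0$, a subsequence of indices (which I still call $N$), and parameters $(\mathbf{c}^N, \mathbf{d}^N) \in \mathcal{M}^{\mathbf{s}^N}$ with $d((\mathbf{c}^N, \mathbf{d}^N), \mathcal{M}^{\mathbf{s}}) \geq \epsilon$ for all $N$. I aim to extract a subsequential limit of these parameters that lies in $\mathcal{M}^{\mathbf{s}}$, which will furnish the desired contradiction.

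The first step is to observe that fullness is an ``eventual'' property along the sequence $\mathbf{s}^N$. Since $\mathbf{s}$ is full, there exists $K \geq 1$ such that every symbol $0, 1, \ldots, D-1$ appears at some position in $\{s_1, \ldots, s_K\}$. Because ultrametric convergence $\mathbf{s}^N \to \mathbf{s}$ forces agreement on longer and longer prefixes, we have $s^N_k = s_k$ for $1 \leq k \leq K$ once $N$ is large enough, and hence $\mathbf{s}^N$ is itself full. Lemma \ref{boundedness2} then gives $\mathcal{M}^{\mathbf{s}^N} \subset \overline{\mathrm{D}}(0,2)^D \times \{2, \ldots, d\}^D$ for all large $N$. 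Since this enveloping set is compact, we may pass to a further subsequence so that $(\mathbf{c}^N, \mathbf{d}^N) \to (\mathbf{c}^*, \mathbf{d}^*)$. The discrete nature of the degree coordinates moreover ensures that $\mathbf{d}^N = \mathbf{d}^*$ for all sufficiently large $N$.

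Finally, I verify that $(\mathbf{c}^*, \mathbf{d}^*) \in \mathcal{M}^{\mathbf{s}}$. Fix any $0 \leq m < n$. Taking $N$ large enough that $\mathbf{s}^N$ agrees with $\mathbf{s}$ on positions $1, \ldots, n$ and $\mathbf{d}^N = \mathbf{d}^*$, the two compositions coincide:
$$Q^{\mathbf{s}^N, \mathbf{c}^N, \mathbf{d}^N}_{m,n}(0) = Q^{\mathbf{s}, \mathbf{c}^N, \mathbf{d}^*}_{m,n}(0).$$
By continuity of the polynomial expression on the right in the entries of $\mathbf{c}^N$, the right-hand side converges to $Q^{\mathbf{s}, \mathbf{c}^*, \mathbf{d}^*}_{m,n}(0)$ as $N \to \infty$. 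On the other hand, since $(\mathbf{c}^N, \mathbf{d}^N) \in \mathcal{M}^{\mathbf{s}^N}$, Lemma \ref{CharacterizingM} gives $0 \in \mathcal{K}^{\mathbf{s}^N, \mathbf{c}^N, \mathbf{d}^N}_m$, so by Lemma \ref{completelyinvariant} together with Lemma \ref{boundedness1} we obtain $|Q^{\mathbf{s}^N, \mathbf{c}^N, \mathbf{d}^N}_{m,n}(0)| \leq 2$. Passing to the limit yields $|Q^{\mathbf{s}, \mathbf{c}^*, \mathbf{d}^*}_{m,n}(0)| \leq 2$ for all $0 \leq m < n$, so $(\mathbf{c}^*, \mathbf{d}^*) \in \mathcal{M}^{\mathbf{s}}$, contradicting the standing assumption that $d((\mathbf{c}^N, \mathbf{d}^N), \mathcal{M}^{\mathbf{s}}) \geq \epsilon$. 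The only genuine technical input is the boundedness lemma; the rest is a straightforward diagonal extraction, so I do not anticipate any serious obstacle beyond careful bookkeeping of the ultrametric-compatibility of finite prefixes and the fact that fullness is needed only ``eventually'' along the approximating sequence.
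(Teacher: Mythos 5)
Your proof is correct, but it takes a genuinely different route from the paper's. The paper argues directly: it shows that any point $(\mathbf{c},\mathbf{d})$ in the compact set $\overline{\mathrm{D}}(0,2)^D \times \{2,\ldots,d\}^D$ that lies outside $\mathcal{M}^{\mathbf{s}}$ has an escaping critical orbit at some finite time, uses prefix agreement of $\mathbf{s}^N$ with $\mathbf{s}$ plus continuity of the finite compositions in the $\mathbf{c}$-parameters to propagate this escape to a fixed $\delta$-ball and to all large $N$, and then covers the complement of the $\varepsilon$-neighbourhood of $\mathcal{M}^{\mathbf{s}}$ with finitely many such balls to pick a uniform $N_0$. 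You instead argue by contradiction: extract a sequence $(\mathbf{c}^N,\mathbf{d}^N) \in \mathcal{M}^{\mathbf{s}^N}$ staying at distance $\geq \epsilon$ from $\mathcal{M}^{\mathbf{s}}$, use the compactness from Lemma~\ref{boundedness2} to pass to a limit $(\mathbf{c}^*,\mathbf{d}^*)$, and then show that every finite critical composition $Q^{\mathbf{s},\mathbf{c}^*,\mathbf{d}^*}_{m,n}(0)$ is bounded by $2$ via Lemma~\ref{boundedness1} and a prefix-agreement/continuity argument, concluding $(\mathbf{c}^*,\mathbf{d}^*) \in \mathcal{M}^{\mathbf{s}}$ and hence a contradiction. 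Both proofs use the same two essential inputs --- the uniform boundedness/compactness of $\mathcal{M}^{\mathbf{s}^N}$ from Lemma~\ref{boundedness2} (which is where fullness and the degree bound matter) and continuity of finite compositions in the parameters --- but the paper deploys the compactness to produce a finite subcover while you deploy it to extract a convergent subsequence. Your version is shorter and arguably cleaner; the paper's version has the advantage of producing an explicit $\delta$-ball and $N_0$ at each escaping point, which is the more quantitative formulation. One small point worth flagging: your argument uses Lemma~\ref{boundedness1} (and implicitly Lemmas~\ref{CharacterizingM} and~\ref{completelyinvariant}) to conclude $|Q^{\mathbf{s}^N,\mathbf{c}^N,\mathbf{d}^N}_{m,n}(0)| \leq 2$, whereas the paper's proof uses the escape-radius fact $R=2$ instead; both are legitimate and both are available in the paper, but they are distinct facts and it is worth noting which one you are invoking.
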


\begin{proof} Since $\left\{{\bf s}^N\right\}^\infty_{N=1}$ converges to ${\bf s}$, for any finite $m_0 \ge 1$, there exists $N_0$ such that the first $m_0$ components of components of the templates ${\bf s}^N$ and ${\bf s}$ coincide for all $N \geq N_0$. Since ${\bf s}$ is full, from this we see that  
${\bf s}^N$ must be full for all sufficiently large $N$. It follows from Lemma~\ref{boundedness2} that ${\cal M}^{\bf s} \subset \overline{\mathrm{D}}(0,2)^D \times \{2, \ldots \ldots, d\}^D$ and likewise ${\cal M}^{{\bf s}^N} \subset \overline{\mathrm{D}}(0,2)^D \times \{2, \ldots \ldots, d\}^D$ (for all sufficiently large $N$). 

Suppose that $({\bf c}, {\bf d}) = ((c_0,  \ldots \ldots, c_{D-1}), (d_0,  \ldots \ldots, d_{D-1})) \in \overline{\mathrm{D}}(0,2)^D \times \{2, \ldots \ldots, d\}^D$. From above is not hard to see, in a similar manner to the classical case for a single quadratic polynomial $z^2 + c$,
that $R=2$ acts as an escape radius for the polynomial sequence arising from the $D$-tuples $({\bf c}, {\bf d})$ in conjunction with any template and in particular ${\bf s}^N$ and ${\bf s}$ (note that the existence of this escape radius for the case $D=d=2$ is Theorem 2.6 in~\cite{JNLS} while the general case is proved similarly). If in addition we assume that $({\bf c}, {\bf d}) \in (\overline{\mathrm{D}}(0,2)^D \times \{2, \ldots \ldots, d\}^D) \setminus {\cal M}^{\bf s}$, then at least one of the critical points of the template iteration under ${\bf s}$ escapes. More precisely, there exist $m_e \geq 0$, $t_e \geq 1$  such that
$$| Q^{\bf s}_{m_e,m_e + t_e}(0) | = |f_{(c_{s_{m_e+t_e}},d_{s_{m_e+t_e}})} \circ \cdots \cdots \circ f_{(c_{s_{m_e+1}},d_{s_{m_e+1}})}(0) | > 3$$
(where the polynomials $f_{(c_{s_m}, d_{s_m})}$, $m_e+1 \le m \le m_e+t_e$ are constructed using the template ${\bf s}$ and the parameter $({\bf c}, {\bf d})$).

{Again since $\left\{{\bf s}^N\right\}^\infty_{N=1}$ converges to ${\bf s}$}, there exists $N_0$ such that the first $m_e+t_e$ entries of the templates ${\bf s}^N$ and ${\bf s}$ coincide for all $N \geq N_0$. Hence in particular:
$$Q^{{\bf s}^N}_{m_e,m_e+t_e}(0) = Q^{\bf s}_{m_e,m_e+t_e}(0)$$
so that
\begin{equation}
| Q^{{\bf s}^N}_{m_e,t_e}(0) | > 3, \: \text{ for all } \: N \geq N_0.
\label{ineq1}
\end{equation}

\noindent For $(\tilde {\bf c}, \tilde {\bf d}) = ((\tilde{c}_0,  \ldots \ldots, \tilde{c}_{D-1}), (\tilde{d}_0,  \ldots \ldots, \tilde{d}_{D-1})) \in {\mathbb C}^D \times \{2, \ldots \ldots, d\}^D$, one can define, similarly to our earlier notation, the compositions 
$$\tilde{Q}^{{\bf s}^N}_{m_e,m_e+t_e} = \tilde{Q}^{\bf s}_{m_e,m_e+t_e}: = f_{(\tilde c_{s_{t_e+m_e}},\tilde d_{s_{t_e+m_e}})} \circ \cdots \cdots \circ f_{(\tilde c_{s_{m_e+1}},\tilde d_{s_{m_e+1}})}$$ 
(where the polynomials $f_{(\tilde{c}_{s_m},\tilde{d}_{s_m})}$, $m_e+1 \le m \le m_e+t_e$ are constructed using the template ${\bf s}$, the parameter $\tilde {\bf c}$ which is allowed to vary over ${\mathbb C}^D$, and the degree bound $d$ which we regard as fixed).

It is easy to see that these polynomials are jointly continuous in $z$, and in each $\tilde {c}_i$, $0 \le i \le d-1$. In addition, the coefficients for these polynomials (in $z$) are fixed polynomials in the parameters $\tilde{c}_i$ whose degrees in each variable are at most $d^{m_e+t_e-1}$. Further, the degrees of these coefficient polynomials will be locally constant as the parameter $({\bf c}, {\bf d})$ moves continuously. Finally, the degrees of these polynomials (again in $z$) are at most $d^{m_e+t_e}$ and thus uniformly bounded. Hence the partial derivatives in each $\tilde{c}_i$, $0 \le i \le d-1$ of these  polynomials are locally uniformly bounded as functions of these $D$ variables and there thus exists $\delta >0$ such that:
\begin{equation}
d(({\bf c}, {\bf d}),({\bf \tilde{c}}, {\bf \tilde{d}})) < \delta \: \Longrightarrow \: |\tilde{Q}^{{\bf s}^N}_{m_e,m_e+t_e}(0) - Q^{{\bf s}^N}_{m_e,m_e+t_e}(0)| <1.
\label{ineq2}
\end{equation}

\noindent From~\eqref{ineq1} and~\eqref{ineq2}, it follows that, for all $(\tilde {\bf c}, \tilde {\bf d}) \in \mathrm{B}(({\bf c}, {\bf d}), \delta)$ and $N \geq N_0$, we have:
$$|\tilde{Q}^{{\bf s}^N}_{m_e,m_e+t_e}(0)| \geq |Q^{{\bf s}^N}_{m_e,m_e+t_e}(0)| - |\tilde{Q}^{{\bf s}^N}_{m_e,m_e+t_e}(0) -Q^{{\bf s}^N}_{m_e,m_e+t_e}(0)| > 3-1 =2.$$

\noindent Since from above $R=2$ is an escape radius for any iterated template system with $({\bf c}, {\bf d}) \in\overline{\mathrm{D}}(0,2)^D \times \{2, \ldots \ldots, d\}^D$, 
it follows that $(\tilde {\bf c}, \tilde {\bf d}) \notin {\cal M}^{{\bf s}^N}$. Hence we have shown that, given any $({\bf c}, {\bf d})  \in (\overline{\mathrm{D}}(0,2)^D \times \{2, \ldots \ldots, d\}^D) \setminus {\cal M}^{{\bf s}}$, we can find $N_0 \in {\mathbb N}$ and  $\delta > 0$ such that the ball ${\mathrm {B}}(({\bf c}, {\bf d}),\delta)$ is contained in  $(\mathbb{C}^D \times \{2, \ldots \ldots, d\}^D )\setminus {\cal M}^{{\bf s}^N}$ for all $N \geq N_0$.\\

Now let $\varepsilon>0$ and set 
$$N_\varepsilon = \{ ({\bf c}, {\bf d}) \in \overline{\mathrm {D}}(0,2)^D\times \{2, \ldots \ldots, d\}^D:  d(({\bf c}, {\bf d}) ,{\cal M}^{\bf s}) < \varepsilon \}.$$

\noindent By Lemma \ref{boundedness2}, we have ${\cal M}^{\bf s} \subset \overline{\mathrm {D}}(0,2)^D\times \{2, \ldots \ldots, d\}^D$, so that ${\cal M}^{\bf s} \subset N_\varepsilon$. It follows from above that for any $D$-tuples $({\bf c}, {\bf d}) \in (\overline{\mathrm{D}}(0,2)^D\times \{2, \ldots \ldots, d\}^D)  \setminus N_\varepsilon$, one can make $\delta > 0$ sufficiently small and $N_0 > 0$ sufficiently large to guarantee that
$$B(({\bf c}, {\bf d}) ,\delta) \subset (\mathbb{C}^D \times \{2, \ldots \ldots, d\}^D )\setminus {\cal M}^{{\bf s}^N},\quad N \geq N_0.$$

\noindent Since $(\overline{\mathrm{D}}(0,2)^D\times \{2, \ldots \ldots, d\}^D) \setminus N_\varepsilon$ is compact (this is the point in the proof where the existence of the degree bound $d$ is essential), it follows that we can actually choose $N_0$ large enough such that
$$ (\overline{\mathrm{D}}(0,2)^D\times \{2, \ldots \ldots, d\}^D) \setminus N_\varepsilon \subset (\mathbb{C}^D\times \{2, \ldots \ldots, d\}^D) \setminus {\cal M}^{{\bf s}^N}, \: \text{ for all } N \geq N_0$$
and, since ${\cal M}^{{\bf s}^N} \subset \overline{\mathrm {D}}(0,2)^D\times \{2, \ldots \ldots, d\}^D$, again from Lemma \ref{boundedness2} above, we have in fact that
$$ \mathbb{C}^D\times \{2, \ldots \ldots, d\}^D  \setminus N_\varepsilon \subset (\mathbb{C}^D\times \{2, \ldots \ldots, d\}^D) \setminus {\cal M}^{{\bf s}^N}, \: \text{ for all } N \geq N_0.$$
Hence
$${\cal M}^{{\bf s}^N} \subset N_\varepsilon, \: \text{ for all } N \geq N_0.$$
In conclusion, since ${\cal M}^{{\bf s}^N}$ is compact in view of Lemma \ref{boundedness2} (which again requires the existence of the degree bound $d$), we have 
$$d({\cal M}^{{\bf s}^N},{\cal M}^{\bf s}) <\varepsilon, \: \text{ for all } N \geq N_0$$
which completes the proof. 
\end{proof}

\vspace{4mm}
Basically, the above theorem states that the template Mandelbrot sets depend upper semicontinuoulsy on the parameter $({\bf c}, {\bf d})$ (see Figure \ref{whole_set_hole} below for an illustration of how this looks in practice). We remark the similarity between this result and the upper semicontinuous dependence of the filled Julia set in non-autonomous polynomial iteration (Theorem 2.4 in \cite{comerford2006hyperbolic}) as well as the classical iteration with a single polynomial. 
We will next show next that that this statement does not remain true in the other direction. In other words, we do not have lower semicontinuous dependence, and thus we do not have continuity with respect to the Hausdorff topology. To be precise, we will show that:

\begin{figure}[h!]
\label{WholeSets}
\begin{center}
    \includegraphics[width=1.0\textwidth]{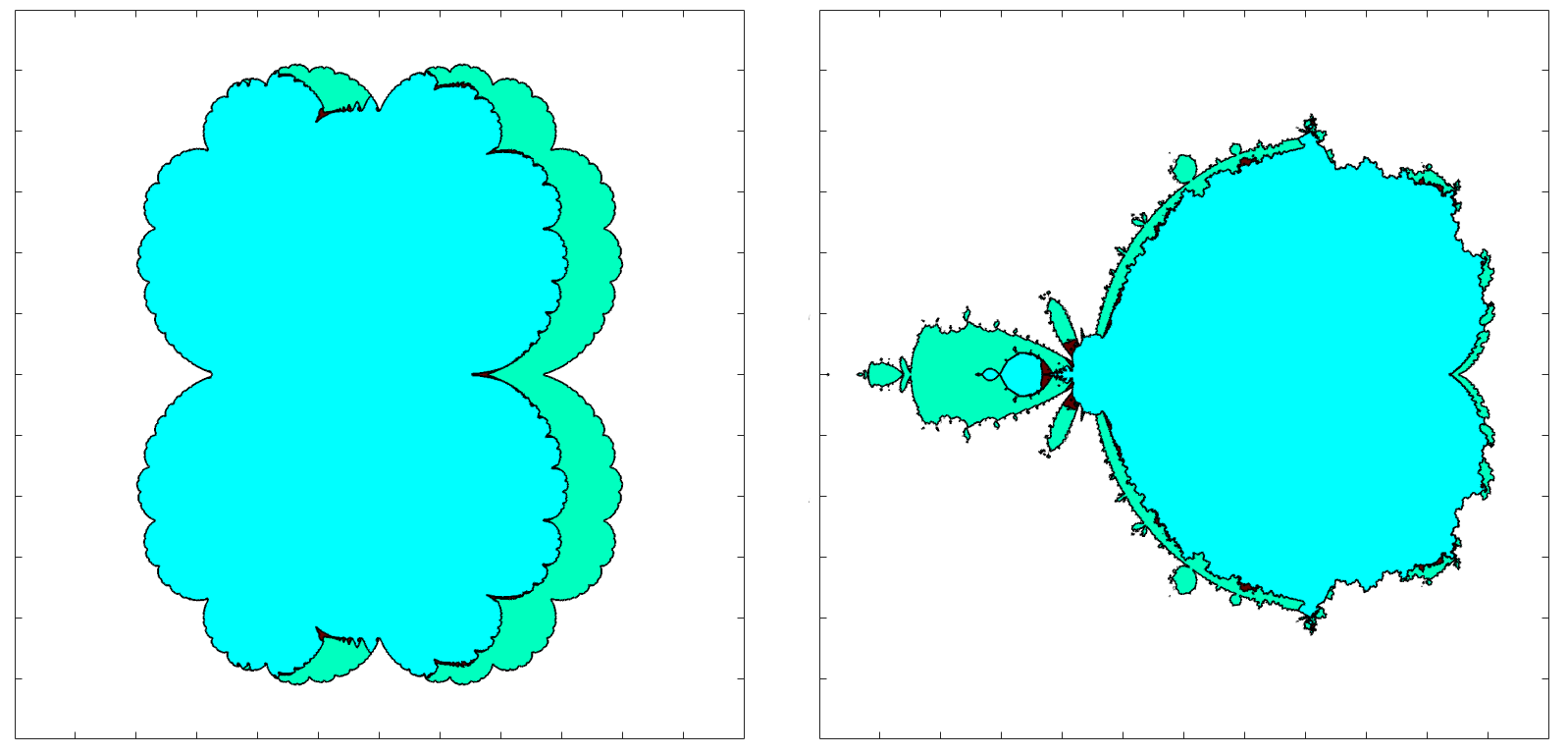}
\end{center}
    \caption{Slices of the Mandelbrot set for the template ${\bf s} =0111\hdots$ using two quadratic polynomials. The parameter is $({\bf c}, {\bf d}) = ((c_0, c_1), (d_0, d_1)) = ((\tfrac{1}{2} - \tfrac{1}{256}, \tfrac{1}{4} + \tfrac{1}{256}),(2,2))$ 
    and slices are taken in the $c_0$-direction on the left and in the $c_1$-direction on the right. The Mandelbrot slices for the limit template are shown in green while the approximate slices using periodic approximations to the limit template of periods $20$ and $200$ are shown in light blue and dark brown, respectively. 
The pictures were made using a maximum of $400$ iterations 
 and a resolution of $1200 \times 1200$. The plotting ranges are $[-1.25,1.25]\times[-1.25,1.25]$ for the $c_0$ panel and $[-1.5,0.5]\times[-1,1]$ for the $c_1$ panel.}
    \label{whole_set_hole}
\end{figure}

\begin{thm}
Let $D=d=2$, let ${\bf s}=(011111\ldots\ldots)$, and let $\{{\bf s}^N\}_{N=1}^\infty$ be any sequence of templates in $\{ 0,1 \}^\infty$ such that $\ds \lim_{N \to \infty} \| {\bf s}^N - {\bf s} \| =0$, and such that ${\bf s}^N \neq {\bf s}$ for infinitely many $N$. Then 
$$d({\cal M}^{\bf s},{\cal M}^{{\bf s}^N}) \not\to 0 \quad\mbox{as} \quad N \to \infty.$$
\label{counterexample}

\end{thm}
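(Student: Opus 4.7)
My plan is to exhibit a specific point $(c_0^*, c_1^*) \in \mathcal{M}^{\bf s}$ that is uniformly bounded away from $\mathcal{M}^{{\bf s}^N}$ along the subsequence where ${\bf s}^N \neq {\bf s}$. Motivated by the parabolic dynamics of $f_{1/4}(z) = z^2 + 1/4$ at its fixed point $z = 1/2$, I take $(c_0^*, c_1^*) = (1/2, 1/4)$. First I verify that $(c_0^*, c_1^*) \in \mathcal{M}^{\bf s}$: since $f_{1/4}(1/2) = 1/2$, the orbit of $0$ under ${\bf s} = 0111\ldots$ with this parameter is $0 \mapsto 1/2 \mapsto 1/2 \mapsto \cdots$, so the point lies in $\mathcal{M}^{\bf s}$ by Lemma~\ref{CharacterizingM}. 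By passing to the subsequence on which ${\bf s}^N \neq {\bf s}$ (which I reindex), I set $k_N := \min\{m \geq 2 : s^N_m \neq s_m\}$; the ultrametric convergence ${\bf s}^N \to {\bf s}$ forces $k_N \to \infty$, and since $s_m = 1$ for all $m \geq 2$, we must have $s^N_{k_N} = 0$.

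The central claim I would prove is that there exists a universal $\delta > 0$, say $\delta = 1/32$, such that the open ball $\mathrm{B}((c_0^*, c_1^*), \delta)$ is disjoint from $\mathcal{M}^{{\bf s}^N}$ for all $N$ large enough. This immediately yields $d((c_0^*, c_1^*), \mathcal{M}^{{\bf s}^N}) \geq \delta$ and hence $d(\mathcal{M}^{\bf s}, \mathcal{M}^{{\bf s}^N}) \geq \delta$. To prove the ball estimate I fix $(c_0, c_1)$ in this ball and observe that the ${\bf s}^N$-orbit of $0$ coincides with the ${\bf s}$-orbit through time $k_N - 1$, differing first at time $k_N$ where the extra $f_{c_0}$ is applied. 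Either the ${\bf s}$-orbit has already exceeded modulus $2$ by some time $\leq k_N - 1$---in which case the escape radius property (Theorem~2.6 of~\cite{JNLS}) forces the ${\bf s}^N$-orbit to continue diverging---or the orbit remains in $\overline{\mathrm{D}}(0,2)$ through time $k_N - 1$ and, for $k_N$ large, the iterate $f_{c_1}^{k_N - 2}(c_0)$ lies within any prescribed radius $\eta$ of the attracting or parabolic fixed point $p(c_1) \approx 1/2$ of $f_{c_1}$.

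In the latter case, applying the extra $f_{c_0}$ at time $k_N$ produces a point near $(1/2)^2 + 1/2 = 3/4$, which is strictly outside $K_{1/4}$: on the real axis one has $f_{1/4}(z) - z = (z - 1/2)^2 \geq 0$, so real orbits starting at $3/4$ increase monotonically to $+\infty$ and reach modulus $> 2$ within five iterations. For $\delta$ and $\eta$ sufficiently small, this kicked point remains outside $K_{c_1}$ and, by continuity, its orbit under any composition of $f_{c_0}, f_{c_1}$ with $(c_0, c_1) \in \mathrm{B}((c_0^*, c_1^*), \delta)$ exceeds modulus $2$ within a uniformly bounded number of iterations. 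The escape radius property then forces divergence, so $(c_0, c_1) \notin \mathcal{M}^{{\bf s}^N}$.

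The main technical obstacle will be uniformly controlling the convergence $f_{c_1}^{k_N - 2}(c_0) \to p(c_1)$ across the ball. This is the phenomenon of parabolic implosion near $c_1 = 1/4$: for $c_1$ inside the main cardioid the convergence is geometric with rate $|2p(c_1)| = 1 - O(\sqrt{|1/4 - c_1|})$, while for $c_1$ just outside the main cardioid the orbit hovers near $1/2$ for a time of order $|c_1 - 1/4|^{-1/2}$ before escaping. In either regime the relevant transition timescale across $\mathrm{B}((c_0^*, c_1^*), \delta)$ is bounded above by some $T(\delta) = O(\delta^{-1/2})$, and since $k_N \to \infty$ one has $k_N > T(\delta)$ for all sufficiently large $N$. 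The needed control can be obtained from real-axis monotonicity of $f_{c_1}$ for $c_1 \leq 1/4$ combined with holomorphic continuity in the complex parameter.
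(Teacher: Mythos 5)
Your overall strategy -- anchoring at a parameter near $(c_0,c_1)=(\tfrac12,\tfrac14)$ so that the ${\bf s}$-orbit of $0$ is trapped near the fixed point of $f_{c_1}$, and observing that the extra application of $f_{c_0}$ forced by ${\bf s}^N\neq{\bf s}$ kicks that iterate out of $\mathcal K_{c_1}$ -- is the same mechanism as in the paper. The crucial difference is that you take the parabolic parameter $(\tfrac12,\tfrac14)$ itself, whereas the paper works at $(c_0,c_1)=(\tfrac12-\varepsilon_0,\tfrac14-\varepsilon_1)$ with $\varepsilon_0,\varepsilon_1>0$ satisfying the explicit constraint $3\sqrt{\varepsilon_1}-\varepsilon_1+\varepsilon_0<\tfrac14$, which is precisely what guarantees $f_{c_0}(\alpha)>\beta+\sqrt{\varepsilon_1}$ with a quantitative margin.

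This difference is fatal to your concrete claim. You assert that $\mathrm B\bigl((\tfrac12,\tfrac14),\tfrac{1}{32}\bigr)$ is disjoint from $\mathcal M^{{\bf s}^N}$ for all large $N$. That is false. Take $(\tilde c_0,\tilde c_1)=\bigl(\tfrac{61}{128},\tfrac{15}{64}\bigr)$, which lies in the ball since the distance to $(\tfrac12,\tfrac14)$ is $\max\bigl(\tfrac{3}{128},\tfrac{1}{64}\bigr)=\tfrac{3}{128}<\tfrac{1}{32}$. Here $f_{\tilde c_1}$ has attracting fixed point $\alpha=\tfrac38$ and repelling fixed point $\beta=\tfrac58$, and one computes $f_{\tilde c_0}(\alpha)=\tfrac{9}{64}+\tfrac{61}{128}=\tfrac{79}{128}<\tfrac{80}{128}=\beta$. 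Thus the kick by $f_{\tilde c_0}$ of any real point sufficiently near $\alpha$ lands back inside $[-\beta,\beta]=\mathcal K_{\tilde c_1}\cap\mathbb R$ rather than escaping. Taking the legitimate approximating sequence ${\bf s}^N=0\,1^{N}\,0\,1^\infty$, every critical orbit of the template iteration at $(\tilde c_0,\tilde c_1)$ is bounded for $N$ large (the forward $f_{\tilde c_1}$-iterates of $0$ and of $\tilde c_0$ are trapped in $[0,\tfrac{61}{128}]\subset[-\beta,\beta]$, and after being close to $\alpha$ the single $f_{\tilde c_0}$ sends them below $\beta$, after which the remaining $f_{\tilde c_1}$-iteration keeps them bounded), so $(\tilde c_0,\tilde c_1)\in\mathcal M^{{\bf s}^N}$. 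The underlying phenomenon is algebraic: the condition for the kick $f_{c_0}(\alpha_{c_1})$ to land inside $\mathcal K_{c_1}$ is $\tfrac12-c_0>\tfrac14+(\tfrac14-c_1)-2\sqrt{\tfrac14-c_1}$, and the right side dips below $\tfrac1{32}$ once $\tfrac14-c_1$ is of order $\tfrac1{64}$; so the ``bounded'' region wedges into the ball of radius $\tfrac1{32}$ along the $c_1$-direction. (For $\delta<\tfrac1{64}$ this particular obstruction disappears, but you have offered no argument that any smaller universal $\delta$ actually works, and the parabolic-implosion and complex-parameter issues you flag yourself remain unresolved -- your assertion that the kicked point exceeds modulus $2$ ``within a uniformly bounded number of iterations'' is also not true near $c_1=\tfrac14$, where the escape rate above $\beta_{c_1}$ degenerates like $1+2\sqrt{\tfrac14-c_1}\to1$.)

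So the gap is not merely a missing constant: by choosing the degenerate parabolic parameter you lose the uniform margin $f_{c_0}(\alpha)-\beta>\sqrt{\varepsilon_1}>0$ that drives the paper's entire stability argument (Lemma~\ref{lemma_from_hell} and the quantities $\eta,\delta,\tau_0,\eta'$ all depend on having $\varepsilon_0,\varepsilon_1>0$ bounded away from zero). The fix is to move strictly inside the main cardioid for $f_{c_1}$ and choose $c_0$ so that the kick has a genuine, quantified margin above $\beta$, which is exactly what the paper does.
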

\noindent {\bf Remark.} The key to the proof of this result is the asymmetry of the sequence ${\bf s}$ above which passes from $0$ to $1$ but not from $1$ to $0$. On the other hand, the conditions on the approximating sequences ${\bf s}^N$ \emph{force} these sequences to make a transition from $1$ to $0$ for infinitely many such sequences. Using this, we can make the dynamics sufficiently different (at this point, the reader might like to consult Figure \ref{MappingsPic} below) to ensure that all critical points for template iteration using ${\bf s}$ with parameters $((c_0, c_1), (2,2))$ have bounded orbits while at the same time we can ensure that, for a reasonably small neighbourhood of these parameters, there must be a critical point whose orbit escapes if one uses the templates ${\bf s}^N$ (again for infinitely many such sequences). Before we can proceed with the proof of this result, we first need a small lemma about stability under perturbation for the basin of infinity. Since we are now in the situation of constant degree $2$, for convenience we will suppress the degree vectors in the parameters, referring to $(c_0, c_1)$ instead of $((c_0, c_1), (2,2))$ as above, using the product metric on ${\mathbb C}^2$ instead of ${\mathbb C}^2 \times \{2\}^2$ for our parameter neighbourhoods, and identifying our parameter spaces with subsets of ${\mathbb C}^2$ instead of ${\mathbb C}^2 \times \{2\}^2$.

\begin{lemma}
Let $0 < \varepsilon_0, \varepsilon_1 \leq \tfrac{1}{8}$ and let  $c_0=\tfrac{1}{2}-\varepsilon_0$, $c_1 = \tfrac{1}{4} - \varepsilon_1$. Then there exist $\eta >0$ and $\delta>0$ depending only on $\varepsilon_0$ and $\varepsilon_1$ such that, if ${\bf s} \in \{ 0,1 \}^\infty$ is any arbitrary template,  $d((c_0,c_1),(\tilde{c}_0,\tilde{c}_1))<\eta$, and $\{\tilde{P}_m^{\bf s} \}_{m=1}^\infty$ is the polynomial sequence constructed from the pair $(\tilde{c}_0,\tilde{c}_1)$ (and degrees $2$) according to the template ${\bf s}$, then
$${\mathrm {D}}(x,\delta) \subset \tilde{A}^{\bf s}_\infty, \text{ for all } x \geq \beta + \sqrt{\varepsilon_1},$$

\noindent {where $\beta := \tfrac{1}{2} + \sqrt{\epsilon_1}$  is the repelling fixed point for $f_{c_1}$} and $\tilde{A}^{\bf s}_\infty$ is the basin of infinity for $\{\tilde{P}_m^{\bf s} \}_{m=1}^\infty$. 
\label{lemma_from_hell}
\end{lemma}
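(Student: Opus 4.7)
The plan is to exploit the monotonicity of the two generating maps on the real half-line $I := [\beta + \sqrt{\varepsilon_1}, \infty) = [\tfrac{1}{2} + 2\sqrt{\varepsilon_1}, \infty)$. Writing $f_{c_0}(x) - x = (x - \tfrac{1}{2})^2 + (\tfrac{1}{4} - \varepsilon_0)$ and $f_{c_1}(x) - x = (x - \tfrac{1}{2})^2 - \varepsilon_1$, on $I$ these quantities are bounded below respectively by $\tfrac{1}{4} - \varepsilon_0 \ge \tfrac{1}{8}$ and by $3\varepsilon_1$ (the second using $(x-\tfrac{1}{2})^2 \ge 4\varepsilon_1$ on $I$). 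Setting $\mu := \min(\tfrac{1}{8}, 3\varepsilon_1) > 0$, both $f_{c_0}$ and $f_{c_1}$ send $I$ into itself and raise every $x \in I$ by at least $\mu$. Hence for any template ${\bf s}$ and any real $x_0 \in I$, the unperturbed real iterates satisfy $Q^{\bf s}_n(x_0) \ge x_0 + n\mu$, so the choice $T := \lceil 4/\mu \rceil$ (depending only on $\varepsilon_0, \varepsilon_1$) guarantees $Q^{\bf s}_T(x_0) > 4$ uniformly in $x_0 \in I$ and in ${\bf s} \in \{0,1\}^\infty$.

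I then transfer this escape to small complex perturbations over the compact range $x_0 \in [\beta + \sqrt{\varepsilon_1}, 4]$. The key observation is that $\tilde Q^{\bf s}_T$ depends only on the first $T$ entries of ${\bf s}$, so as ${\bf s}$ varies one obtains at most $2^T$ distinct composition polynomials in $(z, \tilde c_0, \tilde c_1)$, each of degree at most $2^T$. On the compact set $\overline{\mathrm{D}}(0,5) \times \overline{\mathrm{D}}(c_0, 1) \times \overline{\mathrm{D}}(c_1, 1)$ their partial derivatives are bounded by a common constant $L$. Choosing $\eta, \delta_1 > 0$ with $L(\delta_1 + 2\eta) < 1$ yields $|\tilde Q^{\bf s}_T(z) - Q^{\bf s}_T(x_0)| < 1$ whenever $z \in \mathrm{D}(x_0, \delta_1)$ and $d((\tilde c_0, \tilde c_1), (c_0, c_1)) < \eta$, so $|\tilde Q^{\bf s}_T(z)| > 3 > R = 2$. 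Since $\eta < 1$ forces $|\tilde c_i| < 2$, $R = 2$ remains an escape radius for the perturbed polynomial sequence starting at time $T$ (as in Theorem 2.6 of \cite{JNLS}), and therefore $\tilde Q^{\bf s}_n(z) \to \infty$, i.e., $z \in \tilde A^{\bf s}_\infty$. For $x_0 \ge 4$ the claim is automatic: any $z \in \mathrm{D}(x_0, 1)$ has $|z| \ge 3 > R$, so $z \in \tilde A^{\bf s}_\infty$ directly. Setting $\delta := \min(\delta_1, 1)$ combines both cases.

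The main obstacle is the uniformity in the infinite template ${\bf s}$. It is defused by the reduction above: once $T$ is fixed in terms of $\varepsilon_0, \varepsilon_1$, only the length-$T$ prefixes of ${\bf s}$ matter, and there are only finitely many of these, so a uniform continuity bound on the resulting finite family of polynomials is enough to yield a single pair $(\eta, \delta)$ that works for every ${\bf s}$.
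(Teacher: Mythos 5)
Your proof is correct and follows essentially the same strategy as the paper's: find a uniform real escape time $T$ depending only on $\varepsilon_0,\varepsilon_1$ so that $Q^{\bf s}_T(x)$ clears the escape radius for every template ${\bf s}$ and every $x \ge \beta + \sqrt{\varepsilon_1}$; then exploit the finiteness of the family of $2^T$ length-$T$ composition polynomials to get a uniform modulus-of-continuity bound, yielding a single pair $(\eta,\delta)$ valid for all ${\bf s}$; and dispose of large $x$ trivially via the escape radius $R=2$. The one point of divergence is how the uniform escape time is produced: the paper invokes the existence of some $N_0(\varepsilon_1)$ with $f_{c_1}^{\circ N_0}(\beta + \sqrt{\varepsilon_1}) > 3$ and then dominates $Q^{\bf s}_{N_0}$ by $f_{c_1}^{\circ N_0}$ using $c_0 \ge c_1$ together with monotonicity on $(0,\infty)$, whereas you compute the explicit per-step increment bound $\mu = \min(\tfrac{1}{8}, 3\varepsilon_1)$ for both generating maps on $[\beta+\sqrt{\varepsilon_1},\infty)$ and take $T = \lceil 4/\mu\rceil$. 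This is a minor variant rather than a genuinely different route --- both are monotonicity arguments hinging on $\beta$ being the repelling endpoint of ${\cal K}_{c_1}\cap\mathbb{R}$ --- but yours has the small advantage of making the escape time fully explicit and quantitative where the paper leaves $N_0$ implicit.
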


\noindent \emph{\textbf{Proof of Lemma \ref{lemma_from_hell}}} Let ${\bf s} \in  \{ 0,1 \}^\infty$ be an arbitrary template and let ${\cal J}_{c_0}$, ${\cal J}_{c_1}$ and ${\cal K}_{c_0}$, ${\cal K}_{c_1}$ denote the Julia and filled Julia sets respectively for $f_{c_0}$, $f_{c_1}$ (again the reader is referred to Figure \ref{MappingsPic} for pictures of these Julia sets and how they are used in our proof). As mentioned above, $\beta$ is a repelling fixed point for $f_{c_1}$ and one checks that ${\cal K}_{c_1} \cap \mathbb{R} = [-\beta,\beta]$. Recall also that, provided $(\tilde{c}_0, \tilde{c}_1) \in \overline {\mathrm {B}}((0,0),2)$, then $R=2$ is an escape radius for any polynomial sequence $\{\tilde{P}_m^{\bf s} \}_{m=1}^\infty$ as above. Clearly, there exists $N_0 = N_0(\varepsilon_1)$ such that:
$$f_{c_1}^{\circ N_0} (\beta + \sqrt{\varepsilon_1}) >3.$$

\noindent Since $c_0 \ge c_1$ and $f_{c_0}$ and $f_{c_1}$ are both increasing real-valued functions on $(0,\infty)$, this implies that
\begin{equation}
Q^{\bf s}_{N_0}(x) >3, \text{ for all } x \,\ge \beta + \sqrt{\varepsilon_1}
\label{QN_0}
\end{equation}
\noindent where $Q^{\bf s}_{N_0}$ denotes the finite composition generated by the pair $(c_0, c_1)$ and the first $N_0$ entries of the template ${\bf s}$. We next proceed to finish the proof according to two cases depending on the value of the quantity $x$.\\

{\bf Case 1: $\beta+\sqrt{\varepsilon_1} \le x \le 3$.} Consider the collection of compositions $\{ \tilde{Q}^{\bf u}_{N_0}(z) \}$ where ${\bf u}$ is any template, and the polynomial sequences $\{\tilde{P}_m^{\bf u} \}_{m=1}^\infty$ (which give rise to the compositions $\{ \tilde{Q}^{\bf u}_{N_0}(z) \}$) are generated according to the template ${\bf u}$ and the pair $(\tilde{c}_0,\tilde{c}_1) \in {\mathbb C}^2$ (using degree $2$ for each polynomial $\tilde{P}_m^{\bf u}$). Each member of this collection is jointly differentiable in $z$, $\tilde c_0$, and $\tilde c_1$, and the number of possibilities for the first $N_0$ entries of the template ${\bf u}$ is finite (actually $2^{N_0}$). In addition, the degrees of these polynomials are all $2^{N_0}$ and their coefficients (in $z$) are fixed polynomials in $\tilde c_0$, $\tilde c_1$ (of degree at most $2^{N_0-1}$ in each variable). It follows (somewhat) as in the proof of Theorem \ref{usc} that 
the partial derivatives of these functions in $z$, $\tilde c_0$, $\tilde c_1$  are 
uniformly bounded on any bounded subset of $\mathbb{C}^3$. From this we can deduce that there exist $\eta,\delta>0$ (depending only on $\varepsilon_0$ and $\varepsilon_1$) such that, if $(\tilde{c}_0,\tilde{c}_1) \in {\mathrm {B}}((c_0, c_1), \eta)$, $\beta + \sqrt{\epsilon_1} \le x \le 3$, and $|z-x|<\delta$, then $(\tilde{c}_0,\tilde{c}_1) \in \overline {\mathrm {B}}((0,0), 2)$ and
$$|\tilde{Q}^{\bf u}_{N_0}(z) - Q^{\bf u}_{N_0}(x)|<1.$$

\noindent In conjunction with~\eqref{QN_0}, this implies (by  the reverse triangle inequality) that
$$\tilde{Q}^{\bf s}_{N_0}(\mathrm{D}(x,\delta)) \subset \mathbb{C} \setminus \overline{\mathrm{D}}(0,2).$$
Since $(\tilde{c}_0,\tilde{c}_1) \in \overline {\mathrm {B}}((0,0), 2)$, $R=2$ is an escape radius for any template sequence of quadratic polynomials $\{\tilde{P}_m^{\bf s} \}_{m=1}^\infty$ generated using $(\tilde{c}_0,\tilde{c}_1)$ and ${\bf s}$ as above. It therefore follows that $\mathrm{D}(x,\delta) \subset \tilde A^{\bf s}_\infty$.\\

{\bf Case 2:} $x >3$.
To conclude the proof: if $x > 3$, make $\delta$ smaller if needed so that $\delta \le 1$ (which still leaves the dependence on $\varepsilon_0$ and $\varepsilon_1$ unaltered). Then, since we still have $(\tilde{c}_0,\tilde{c}_1) \in \overline {\mathrm {B}}((0,0), 2)$, it follows that $R=2$ is an escape radius for $\{\tilde{P}_m^{\bf s} \}_{m=1}^\infty$.
We then automatically have ${\mathrm {D}}(x,\delta) \subset \tilde{A}^{\bf s}_\infty$ in this case also, which completes the proof. 
\hfill \qed

\vspace{5mm}
\noindent \emph{\textbf{Proof of Theorem ~\ref{counterexample}.}} Clearly the result follows if we show that, for any template sequence $\{{\bf s}^N\}_{N=1}^\infty$ such that ${\bf s}^N \ne {\bf s}$ for every $N>1$, we have $\limsup_N d({\cal M}^{\bf s},{\cal M}^{{\bf S}^N})>0$. We therefore start (without loss of generality) by assuming this is the case and that ${\bf s}^N \ne {\bf s}$ for every $N \ge 1$.

\afterpage{\clearpage}
\begin{sidewaysfigure}[htb]
  
  \hspace{-.075cm} \includegraphics[width=9cm]{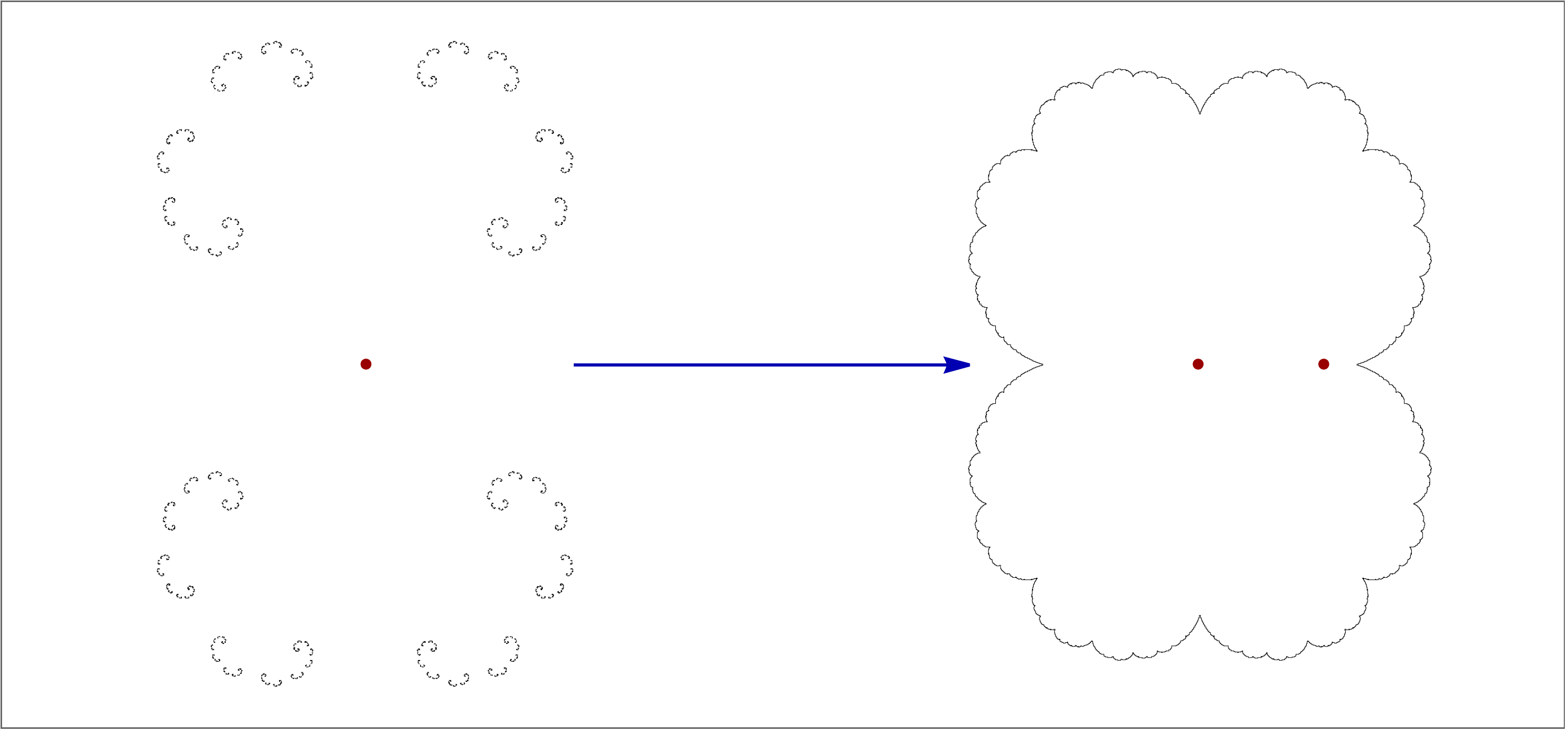}\\
   
  \vspace{2.1cm}
  \hspace{.05cm}\includegraphics[width=20cm]{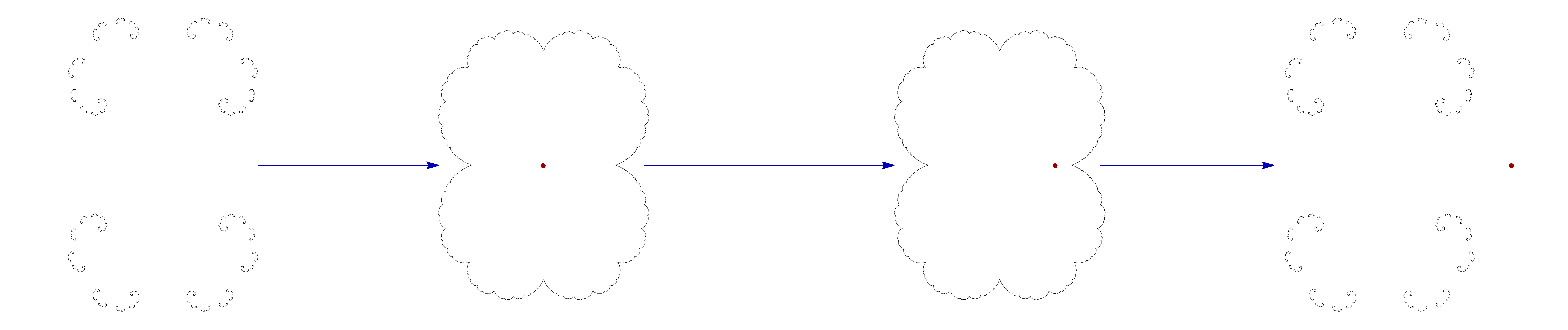}

  \vspace{.9cm}
  \caption{\footnotesize Schematic for the proof of Theorem~\ref{counterexample}. The top set of pictures illustrates the behaviour of the sequence$\{P_m^{\bf s} \}_{m=1}^\infty$ of quadratics generated by the pair $(c_0,c_1)$ and the template ${\bf s}$. Here the critical point $0$ at time $0$ is mapped inside ${\cal K}_1$ while all subsequent critical points also lie inside this filled Julia set and thus do not escape. In the bottom set of pictures we have the behaviour of the sequence of quadratics $\{\tilde{P}_m^{{\bf s}^N} \}_{m=1}^\infty$ generated by the pair $(c_0, c_1)$ and the template ${\bf s}^N$. In this case the critical point $0$ at time $1$ remains inside ${\cal K}_1$ for $N_1$ iterations. However, the next polynomial in the template sequence is $P_{N_1+2}^{{\bf s}^N} = f_{c_0}$ which maps the iterate of this critical point to the basin of infinity so that it escapes. Much of the proof is devoted to showing this behaviour is stable under perturbation and that there is a lower bound for the size of the perturbation which depends only on the constants $\epsilon_0$, $\epsilon_1$.}  \label{MappingsPic}
    \unitlength1cm
\begin{picture}(0.01,0.01)
\put(0.2,16.0){Limit Template ${\bf s} = 01111111\ldots\ldots$}
\put(0.2,9){Approximating Template ${\bf s}^N = 01111111\ldots\ldots1110******$}
\put(1.6,10.5){\footnotesize Stage $0$}
\put(1.6,3.6){\footnotesize Stage $0$}
\put(6.35,10.5){\footnotesize Stage $1$}
\put(6.35,3.6){\footnotesize Stage $1$}   
\put(12.25,3.6){\footnotesize Stage $N_1+1$}
\put(16.75,3.6){\footnotesize Stage $N_1+2$}   
\put(3.85,13.45){\footnotesize \textcolor[rgb]{0,0,0.7}{$P_1^{\bf s} = f_{c_0}$}}          
\put(3.8,6.65){\footnotesize \textcolor[rgb]{0,0,0.7}{$P_1^{{\bf s}^N} = f_{c_0}$}}    
\put(8.8,6.65){\footnotesize \textcolor[rgb]{0,0,0.7}{$Q^{{\bf s}^N}_{1,N_1+1} = f_{c_1}^{\circ N_1}$}}  
\put(14.55,6.65){\footnotesize \textcolor[rgb]{0,0,0.7}{$P_{N_1+2}^{{\bf s}^N} = f_{c_0}$}}  
\put(3.3,14.9){\footnotesize \textcolor[rgb]{0,0.5,0}{${\cal J}_{c_0}$}}     
\put(8.2,14.9){\footnotesize \textcolor[rgb]{0,0.5,0}{${\cal J}_{c_1}$}}     
\put(3.3,8){\footnotesize \textcolor[rgb]{0,0.5,0}{${\cal J}_{c_0}$}} 
\put(8.2,8){\footnotesize \textcolor[rgb]{0,0.5,0}{${\cal J}_{c_1}$}} 
\put(14,8){\footnotesize \textcolor[rgb]{0,0.5,0}{${\cal J}_{c_1}$}} 
\put(18.9,8){\footnotesize \textcolor[rgb]{0,0.5,0}{${\cal J}_{c_0}$}} 
 \end{picture}
\end{sidewaysfigure}

\newpage

As before, let $0<\varepsilon_0, \varepsilon_1 \le \tfrac{1}{8}$ and set $c_0 = \frac{1}{2} - \varepsilon_0$, $c_1 = \frac{1}{4} -\varepsilon_1$.  Recall that, as a single map under iteration, $f_{c_1}(z) = z^2+\frac{1}{4} - \varepsilon_1$ has two fixed points: $\alpha := \frac{1}{2}-\sqrt{\varepsilon_1}$ (which is attracting) and $\beta := \frac{1}{2}+\sqrt{\varepsilon_1}$ (which is repelling). As above, the filed Julia set ${\cal K}_{c_1}$ of $f_{c_1}$ still satisfies ${\cal K}_{c_1} \cap \mathbb{R} = [-\beta,\beta]$. Then 
$$f_{c_0}(0) = \frac{1}{2} - \varepsilon_0 \in [-\beta,\beta] \subset {\cal K}_{c_1}.$$

\noindent Thus, $f_{c_0}(0)$ has bounded orbit under $f_{c_1}$. Since $0$ also has bounded orbit under $f_{c_1}$, all critical points of the sequence $\{P_m^{\bf s} \}_{m=1}^\infty$ generated by the pair $(c_0,c_1)$ and the template ${\bf s}$ (as well as the degree pair $(2,2)$) have bounded orbit and so 
$$(c_0,c_1) \in {\cal M}^{\bf s}.$$

Now take $\varepsilon_0$ and $\varepsilon_1$ sufficiently small such that:

\begin{equation}
3\sqrt{\varepsilon_1} - \varepsilon_1 + \varepsilon_0 < \frac{1}{4}.
\label{eps_ineq}
\end{equation}

\noindent Then
\begin{equation}f_{c_0}(\alpha) = \alpha^2 + \frac{1}{2} -\varepsilon_0 > \beta +\sqrt{\varepsilon_1}.
\label{totherightofbeta}
\end{equation}

Let $\eta,\delta>0$ (depending on $\varepsilon_0$, $\varepsilon_1$) be as in Lemma \ref{lemma_from_hell}, let $m_0 \ge0$ be arbitrary, and let ${\bf u} = \{u_m\}_{m=1}^\infty$ be any template satisfying $u_{m_0+1}=0$. Hence, using \eqref{totherightofbeta}, Lemma \ref{completelyinvariant}, and Lemma \ref{lemma_from_hell}, if $d((c_0, c_1), (\tilde c_0, \tilde c_1))<\eta$, then 
\begin{equation}
 \mathrm{D}(f_{c_0}(\alpha),\delta) \subset \tilde{A}^{\bf u}_{\infty,m_0+1},
\label{basin_of_infinity}
\end{equation}

\noindent where the polynomial sequence $\{\tilde{P}_m^{\bf u} \}_{m=1}^\infty$ is generated according to ${\bf u}$ and $(\tilde c_0, \tilde c_1)$ (as well as the degree pair $(2,2)$), while $\tilde{A}^{\bf u}_{\infty,m_0+1}$ represents the basin of $\infty$ at time $m_0+1$ for this polynomial sequence.

Since $f_{\tilde c_0}$ is continuous, it follows that there exists a $\tau_0>0$ (also depending on $\varepsilon_0$ and $\varepsilon_1$), such that:
\begin{equation}
f_{c_0}(\mathrm{D}(\alpha,\tau_0)) \subset \mathrm{D}(f_{c_0}(\alpha),\tfrac{\delta}{2}).
\label{disc_inclusion}
\end{equation}

Let $\eta'= \min(\eta,\delta/2)$ (which also depends on only $\varepsilon_0$ and $\varepsilon_1$). It follows that, when $|\tilde{c}_0-c_0| < \eta'$ (so that $|f_{\tilde c_0}(z) - f_{c_0}(z)| = |\tilde c_0 - c_0| < \eta' \le \tfrac{\delta}{2}$ for any $z \in {\mathbb C}$), then, by ~\eqref{basin_of_infinity}, ~\eqref{disc_inclusion}
\begin{equation}
\tilde{Q}^{\bf u}_{m_0,m_0+1}(\mathrm{D}(\alpha,\tau_0)) = \tilde P^{\bf u}_{m_0+1}(\mathrm{D}(\alpha,\tau_0)) = \tilde{f}_{c_0}(\mathrm{D}(\alpha,\tau_0)) \subset \mathrm{D}(f_{c_0}(\alpha),\delta) \subset \tilde{A}^{\bf u}_{\infty,m_0+1}.
\label{mapstobasinofinfinity}
\end{equation}

Recall that $f_{c_1}$ has an attracting fixed point at $\alpha = 1/2 -\sqrt{\varepsilon_1}$, with multiplier $2\alpha = 1-2\sqrt{\varepsilon_1}$. Since this condition is clearly stable under perturbation, we can make $\eta'$ smaller if needed (still only depending on $\varepsilon_0$ and $\varepsilon_1$), such that, if $|\tilde{c}_1-c_1|<\eta'$, then $f_{\tilde{c}_1}$ has a fixed point $\tilde{\alpha}$ such that
\begin{equation}
|\tilde{\alpha}-\alpha| < \tau_0/2
\label{**}
\end{equation}

\noindent
while we can at the same time ensure that the multiplier $2\tilde{\alpha}$ of this fixed point has absolute value strictly less than $1-\sqrt{\varepsilon_1}$. The multiplier of $\tilde{\alpha}$ is thus uniformly bounded below away from 1 in absolute value so that the fixed points $\tilde{\alpha}$ are uniformly attracting while the immediate basin of each of them must contain the critical point $0$ (see e.g. \cite{CG} Chapter III Theorem 2.2). An easy argument using relative compactness of $\mathrm D(c_1, \eta')$) shows that there then exists $N_0>0$ (depending only on $\varepsilon_0$ and $\varepsilon_1$), such that, in view of \eqref{**}:
\begin{equation}
f_{\tilde{c}_1}^{\circ N}(0) \in \mathrm{D}(\tilde{\alpha},\tau_0/2) \subset \mathrm{D}(\alpha,\tau_0), \text{ for all } N \geq N_0.
\label{***}
\end{equation}

\begin{figure}[H]
\label{ZoomedSets}
\begin{center}
    \includegraphics[width=1\textwidth]{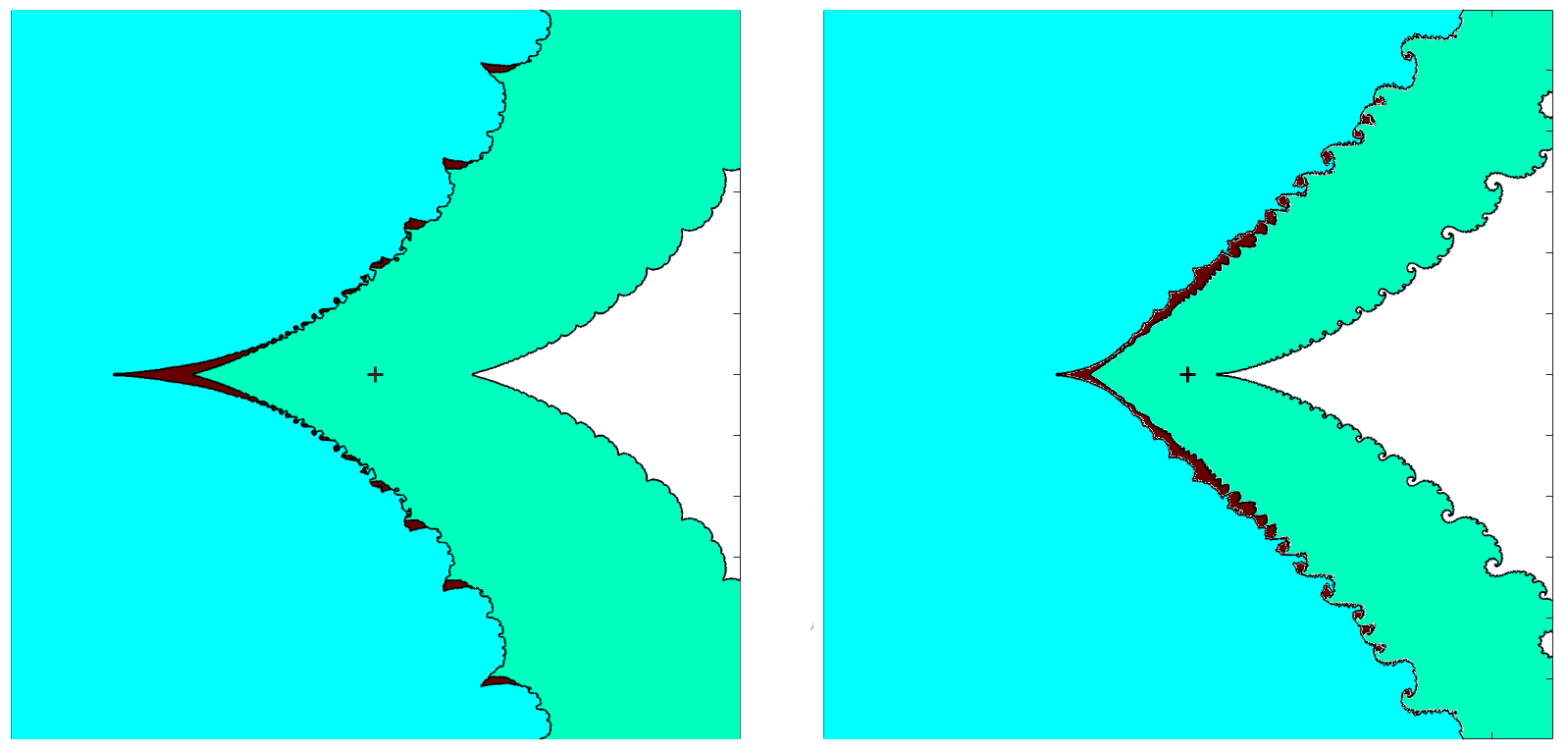}
\end{center}
    \caption{Zooms about $(c_0, c_1)=(\tfrac{1}{2} - \tfrac{1}{256}, \tfrac{1}{4} + \tfrac{1}{256})$ (with degree vector $(2,2)$) of the pictures in Figure \ref{whole_set_hole} illustrating the lack of lower semicontinuity where $d({\cal M}^{\bf s},{\cal M}^{{\bf s}^{N}})$ does not tend to zero as $N$ tends to infinity. Here we have $\varepsilon_0 = \varepsilon_1 = \tfrac{1}{256}$ which satisfies condition \eqref{eps_ineq} in the proof of Theorem \ref{counterexample} above. Again, slices in the $c_0$-direction are on the left and in the $c_1$-direction on the right. The cross marks the point of intersection of these two complex lines at $(c_0,c_1)$. As before, we have a maximum of $400$  iterations, and a resolution of $1200 \times 1200$. As with Figure \ref{whole_set_hole}, the Mandelbrot slices for the limit sequence are in green, with those for the periodic approximations of periods $20$ and $200$ in light blue and and dark brown respectively. The plotting ranges are $[c_0-0.25,c_0+0.25]\times[-0.25,0.25]$ for the $c_0$ panel and $[c_1-0.05,c_1+0.05]\times[-0.05,0.05]$ for the $c_1$ panel.}
    \label{whole_set_zoom}
\end{figure}


\noindent Now we look at the sequence of templates $\{ {\bf s}^N \}_{N=1}^\infty$. Since $\| {\bf s}^N - {\bf s}\| \to 0$ and since we have assumed that ${\bf s}^N \neq {\bf s}$ for all $N$, it follows that there exists $N_1$ (which thus tends to infinity as $N$ tends to infinity) such that 

\begin{itemize}
\item $\tilde{Q}^{{\bf s}^N}_{0,1} = \tilde{P}^{{\bf s}^N}_1 = f_{\tilde{c}_0}$,

\item $\tilde{Q}^{{\bf s}^N}_{1,N_1+1} = f_{\tilde{c}_1}^{\circ N_1}$, and

\item $\tilde{Q}^{{\bf s}^N}_{N_1+1,N_1+2} = \tilde{P}^{{\bf s}^N}_{N_1+2} = f_{\tilde{c}_0}$.
\end{itemize}

\noindent 
In other words, the first member of the sequence $\{\tilde{P}^{{\bf s}^N} \}_{m=1}^\infty$ is $f_{\tilde{c}_0}$, the next $N_1$ members are $f_{\tilde{c}_1}$, and the $(N_1+2)$-nd member is again $f_{\tilde{c}_0}$.
Then, by \eqref{mapstobasinofinfinity} and \eqref{***} (where we let $m_0 = N_1+1$ in ~\eqref{mapstobasinofinfinity}), it follows that, if we choose $N$ sufficiently large so that we also have $N_1 \ge N_0$, then, 
$$\tilde{Q}_{1,N_1+2}^{{\bf s}^N}(0) \in \tilde{A}_{\infty,N_1+2}^{{\bf s}^N}.$$ 

It follows that, for $N$ sufficiently large, if $|\tilde{c}_0-c_0| < \eta'$, then the critical point zero at time one escapes for any template in this sequence, that is:
$$(\tilde{c}_0,\tilde{c}_1) \notin {\cal M}^{{\bf s}^N}$$

\noindent Hence 
$$\mathrm{B}((c_0,c_1),\eta') \subset \mathbb{C}^2 \setminus {\cal M}^{{\bf s}^N}$$

\noindent and thus
$$d((c_0,c_1),{\cal M}^{{\bf s}^N}) \geq \eta'$$

\noindent (see Figure \ref{whole_set_zoom} above) so that 
$$\limsup _{N \to \infty} d({\cal M}^{\bf s},{\cal M}^{{\bf s}^{N}}) >0$$


\noindent as required, from which the result follows.
\hfill \qed

\bibliographystyle{plain}
\bibliography{References3}

\end{document}